\newtheorem{theorem}{Theorem}[section]
\newtheorem{lemma}[theorem]{Lemma}
\newtheorem{proposition}[theorem]{Proposition}
\newtheorem{corollary}[theorem]{Corollary}
\newtheorem{observation}[theorem]{Observation}
\theoremstyle{definition}
\theoremstyle{remark}
\newtheorem{remark}[theorem]{Remark}
\numberwithin{equation}{section}
\begin{document}

\title[Generators of Lie groups and hypercyclicity]{Topological generators of abelian Lie groups and hypercyclic finitely generated abelian semigroups of matrices}

\author[H. Abels]{H. Abels}
\address{Fakult\"{a}t f\"{u}r Mathematik, Universit\"{a}t Bielefeld, Postfach 100131, D-33501 Bielefeld, Germany}
\email{abels@math.uni-bielefeld.de}

\author[A. Manoussos]{A. Manoussos}
\address{Fakult\"{a}t f\"{u}r Mathematik, SFB 701, Universit\"{a}t Bielefeld, Postfach 100131, D-33501 Bielefeld, Germany}
\email{amanouss@math.uni-bielefeld.de}
\urladdr{http://www.math.uni-bielefeld.de/~amanouss}
\thanks{During this research the second author was fully supported by SFB 701 ``Spektrale Strukturen und
Topologische Methoden in der Mathematik" at the University of Bielefeld, Germany. He is grateful for its generosity and hospitality.}

\date{}

\subjclass[2010]{Primary 47D03, 20F05; Secondary 20G20, 22E10, 22E15, 37C85, 47A16.}

\keywords{Topological generator, Lie group, hypercyclic semigroup of matrices.}

\begin{abstract}
In this paper we bring together results about the density of subsemigroups of abelian Lie groups, the minimal number of topological generators of abelian Lie groups
and a result about actions of algebraic groups. We find the minimal number of generators of a finitely generated abelian semigroup or group of matrices with a dense
or a somewhere dense orbit by computing the minimal number of generators of a dense subsemigroup (or subgroup) of the connected component of the identity of its
Zariski closure.
\end{abstract}

\maketitle

\section{Introduction}

The concept of a hypercyclic abelian finitely generated semigroup of bounded linear operators was introduced by N.S. Feldman in  \cite{Feldman3}. It generalizes the
concept of a hypercyclic operator in linear dynamics. A bounded linear operator on a real or complex topological vector space is called \textit{hypercyclic} if the
semigroup generated by this operator has a dense orbit. Hypercyclicity of a single bounded linear operator is a phenomenon that occurs only in infinite dimensional
separable spaces, see e.g. \cite{BM} or \cite{GP}, but hypercyclicity of semigroups may occur also on finite dimensional vector spaces, see \cite{Feldman3},
\cite{{Ker}}. A problem raised by Feldman in \cite{Feldman3} is to find the minimal number of generators for an abelian semigroup of matrices which is hypercyclic.
More specific questions which arise naturally is to solve this problem for special types of matrices. In the same paper Feldman showed that this minimal number of
generators for a semigroup of diagonal  $n\times n$ matrices with complex entries is $n+1$. Recently there has been done much research about this subject. We mention
only \cite{Shkarin1}, \cite{A}, \cite{CoHaMa}, \cite{CoHaMa2}, \cite{AM1}, \cite{AM2}, \cite{CoPa} for the abelian case and \cite{AV}, \cite{J2} for the non-abelian
case. Our methods differ from those in the previously mentioned papers since we use methods from the theories of algebraic groups and algebraic actions.

In this paper we determine the minimal number of generators of a \textit{finitely generated group or semigroup of commuting matrices with real or complex entries}
with a dense or a somewhere dense orbit for several classes of matrices. We would like to point out that for this number \textit{there is no difference between dense
and somewhere dense orbits and also no difference between the group and the semigroup cases}. This follows from the following theorem which is the basic result of the
present paper.

Let $V$ be a finite dimensional real vector space and let $S$ be a subsemigroup of $GL(V)$. For a point $x\in V$ we say that $x$ has a \textit{somewhere dense orbit}
if the closure $\overline{S(x)}$ of its orbit $S(x)$ contains a non-empty open subset of $V$.

\begin{theorem} \label{sdenseLie1}
Let $S$ be a finitely generated commutative subsemigroup of $GL(V)$ and let $x\in V$ be a point which has a somewhere dense orbit. Let $G$ be the Zariski closure of
$S$ and let $G^0$ be its connected component of the identity with respect to the Euclidean topology. Then the orbit $G(x)$ of $x$ is an open subset of $V$, the
natural map $G\to G(x)$, $g \mapsto gx$, is a diffeomorphism and the closure of $S$ is a subgroup of $G$ and contains $G^0$.
\end{theorem}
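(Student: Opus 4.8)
The plan is to transfer the problem to the algebraic group $G$ acting linearly on $V$, settle the geometry there, and then pull everything back to $S$ through the orbit map. First I would record that $G$ is a commutative real algebraic group: the equations $ab=ba$ are Zariski closed, so commutativity passes to the closure, and the Zariski closure of a subsemigroup of an algebraic group is a subgroup (for $g\in G$ the descending chain of closed sets $G\supseteq gG\supseteq g^{2}G\supseteq\cdots$ stabilizes, forcing $gG=G$, hence $e\in G$ and $g^{-1}\in G$). In particular $G$ is a Lie group with finitely many Euclidean components, so $G^{0}$ has finite index. Next, the orbit $G(x)$ is a smooth locally closed subvariety of $V$, say of dimension $d$, with irreducible Zariski closure of the same dimension $d$. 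Since $S(x)\subseteq G(x)$, the hypothesis gives a non-empty Euclidean-open set $U\subseteq\overline{S(x)}$, so $U$ lies in the Zariski closure of $G(x)$; a proper subvariety of the affine space $V$ has empty Euclidean interior, hence $d=\dim V$, and a smooth submanifold of $V$ of full dimension is open in $V$. This proves the first assertion, and it shows in particular that $G(x)$ spans $V$.

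For the orbit map $\pi\colon G\to G(x)$, $g\mapsto gx$: if $g\in G$ fixes $x$, then commutativity gives $g(hx)=(gh)x=(hg)x=h(gx)=hx$ for all $h\in G$, so $g$ fixes the spanning set $G(x)$ pointwise and hence $g=\mathrm{id}_{V}$; thus the stabilizer $G_{x}$ is trivial, $\pi$ is bijective, and by orbit--stabilizer $\dim G=\dim V$. In characteristic zero the differential $d\pi_{g}$ has image the tangent space to the orbit at $gx$, which here is all of $V$; since $\dim G=\dim V$, the map $d\pi_{g}$ is a linear isomorphism for every $g$, so $\pi$ is a bijective local diffeomorphism, hence a diffeomorphism. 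This is the second assertion.

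Finally, using $\pi$ to identify $G$ with the open subset $G(x)$ of $V$, the action of each $s\in S$ on $G(x)$ becomes translation by $s$ in $G$, and $\pi^{-1}(S(x))=S$ because $G_{x}=\{e\}$ forces $\pi^{-1}(sx)=\{s\}$. Since $G(x)$ is open in $V$, the set $U\cap G(x)=U\setminus\partial G(x)$ is again non-empty open (the boundary of an open set is nowhere dense) and lies in $\overline{S(x)}$; applying the homeomorphism $\pi^{-1}$ shows that the closure $\overline{S}$ of $S$ in $G$ (equivalently in $GL(V)$) has non-empty interior. Now I would invoke the result on density of finitely generated subsemigroups of abelian Lie groups: a finitely generated subsemigroup of an abelian Lie group whose closure has non-empty interior has closure equal to an open subgroup, which, being closed, contains the identity component $G^{0}$. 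This yields the last assertion.

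I expect this last step to be the main obstacle. For an arbitrary closed subsemigroup, ``non-empty interior'' is far too weak --- witness $[0,\infty)\subseteq\mathbb{R}$ --- and it is precisely the finite generation of $S$ that excludes such behaviour and must be used; this is where the quoted density theorem for abelian Lie groups, rather than any soft topological-group manipulation, carries the argument. A minor point that still needs care is preserving the somewhere-dense hypothesis when crossing the boundary of $G(x)$ in the transport from $V$ back to $G$.
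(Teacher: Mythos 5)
Your argument is correct and follows essentially the same route as the paper: triviality of the stabilizer from commutativity plus the spanning (somewhere dense) orbit, openness of $G(x)$ and the diffeomorphism $G\to G(x)$ via the standard theory of algebraic group orbits (the paper cites Borel--Tits, \S 3.18, for the orbit being open in its closure and the orbit map being a submersion, which is the same input as your ``locally closed smooth orbit'' plus characteristic-zero separability), and then transfer of somewhere-density from $V$ to $G$ through the orbit map. The density result you invoke at the end is precisely the paper's Theorem 2.1 (proved in its Section 2), so your reduction matches the paper's proof step for step.
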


We will obtain more precise information about the closure of the orbit $Sx$ in \cite{AbMa1}. In particular we will prove that $Gx$ is dense in $V$ for the complex
case, which implies the following.

\begin{corollary} \label{cor12}
If additionally $V$ has a structure as a complex vector space such that every $s\in S$ is complex linear, then $Sx$ is dense in $V$.
\end{corollary}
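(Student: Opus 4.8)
The plan is to derive the corollary from Theorem~\ref{sdenseLie1} together with the forthcoming fact that $Gx$ is dense in $V$ in the complex case (proved in \cite{AbMa1}); the one genuinely new ingredient is a reduction from density of the full orbit $Gx$ to density of $G^0x$, and this is where the complex structure gets used.

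First I would record that $\overline{Sx}=\overline{\overline{S}x}$. Since the action $GL(V)\times V\to V$ is continuous, any $g\in\overline{S}$ is a limit of elements $s_n\in S$, so $gx=\lim s_nx\in\overline{Sx}$; hence $\overline{S}x\subseteq\overline{Sx}$, and taking closures gives equality (the reverse inclusion being trivial). By Theorem~\ref{sdenseLie1}, $\overline{S}$ is a subgroup of $G$ with $G^0\subseteq\overline{S}$, so $\overline{Sx}=\overline{\overline{S}x}\supseteq\overline{G^0x}$, and it therefore suffices to prove $\overline{G^0x}=V$.

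Next I would pass from $Gx$ to $G^0x$. The identity component $G^0$ has finite index in $G$, say with coset representatives $g_1,\dots,g_k$, so $Gx=\bigcup_{i=1}^k g_iG^0x$. By \cite{AbMa1}, $Gx$ is dense in $V$ for the Euclidean topology, hence also for the Zariski topology (Zariski-closed sets being Euclidean-closed). Taking Zariski closures, $V=\bigcup_{i=1}^k g_i\,\overline{G^0x}^{\,\mathrm{Zar}}$, and since $V$ is an irreducible variety one of these closed pieces must be all of $V$; as $g_j$ is an automorphism of $V$, this forces $G^0x$ to be Zariski-dense in $V$. Because $G^0$ is a connected algebraic group, $G^0x$ is the image of the orbit morphism $G^0\to V$, $g\mapsto gx$, and so is constructible (Chevalley); a Zariski-dense constructible subset of the irreducible variety $V$ contains a nonempty Zariski-open subset $U$. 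In the complex case $U$ is Euclidean-dense in $V$, its complement being a proper lower-dimensional subvariety; therefore $G^0x\supseteq U$ is Euclidean-dense, i.e.\ $\overline{G^0x}=V$. Combined with the previous step, $\overline{Sx}\supseteq\overline{G^0x}=V$, which is the assertion.

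The main obstacle, and the point where the hypothesis is genuinely used, is this last step: one cannot deduce $\overline{G^0x}=V$ from the mere fact that $G^0x$ is open and Zariski-dense in $V$, since over $\mathbb{R}$ an orbit of the Euclidean identity component can be a half-space, which is open and Zariski-dense but far from dense. What rescues the argument is that over $\mathbb{C}$ the Euclidean and Zariski identity components coincide, so that $G^0$ really is (the set of complex points of) a connected algebraic group and $G^0x$ is genuinely constructible, whence Chevalley's theorem applies. A small piece of bookkeeping to settle first is to take $G$ to be the Zariski closure of $S$ inside $GL_n(\mathbb{C})$ and to check that this is compatible with the group $G$ of Theorem~\ref{sdenseLie1}.
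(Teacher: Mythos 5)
Your overall route is the same as the paper's: the paper gives no self-contained proof of this corollary, it simply invokes the forthcoming result of \cite{AbMa1} that $Gx$ is dense in $V$ in the complex case together with Theorem \ref{sdenseLie1}, and your proposal is a reasonable reconstruction of the omitted bridge. The first two steps are fine: $\overline{Sx}=\overline{\overline{S}x}\supseteq\overline{G^0x}$, and the finite-index plus irreducibility argument showing that $G^0x$ is Zariski dense in $V$ is correct. You have also correctly located where constructibility fails over $\mathbb{R}$.

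The genuine gap is the point you dismiss as ``a small piece of bookkeeping''. In Theorem \ref{sdenseLie1} the group $G$ is the Zariski closure of $S$ in $GL(V)$ with $V$ regarded as a \emph{real} vector space, i.e.\ a real algebraic group; for such groups the Euclidean identity component need not be Zariski closed, Chevalley's theorem is unavailable, and, more to the point, the complex Zariski closure $G_{\mathbb{C}}$ of $S$ in $GL_{\mathbb{C}}(V)$ can be strictly larger than $G$ (for $S$ generated by a unitary matrix, $G$ is a compact torus while $G_{\mathbb{C}}$ is a complex torus), in which case the ``compatibility with Theorem \ref{sdenseLie1}'' fails: $\overline{S}$ need not contain $G_{\mathbb{C}}^0$. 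So the assertion that $G^0$ ``really is (the set of complex points of) a connected algebraic group'' is exactly what has to be proved, and it uses the somewhere dense orbit hypothesis, not only complex linearity. It is true in your situation, and the missing argument is short: $G_{\mathbb{C}}$ is abelian and its orbit $G_{\mathbb{C}}x\supseteq Sx$ spans $V$, so the argument of Observation \ref{observ} shows that $(G_{\mathbb{C}})_x$ is trivial; hence the orbit map $G_{\mathbb{C}}\to G_{\mathbb{C}}x\subseteq V$ is injective and $\dim_{\mathbb{R}}G_{\mathbb{C}}\leq\dim_{\mathbb{R}}V$. Theorem \ref{sdenseLie1} gives $\dim_{\mathbb{R}}G=\dim_{\mathbb{R}}V$, and $G\subseteq G_{\mathbb{C}}$, so the two groups have equal dimension, $G^0=G_{\mathbb{C}}^0$, and in fact $G=G_{\mathbb{C}}$, since $G$ is then a union of Zariski components of $G_{\mathbb{C}}$ containing the Zariski dense set $S$. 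With this inserted, your Chevalley step goes through; alternatively, once $G^0$ is known to be a connected complex algebraic group you can skip constructibility altogether, because its orbit $G^0x$ is Zariski open in its Zariski closure, which you have already shown to be $V$, hence Euclidean dense.
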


In this paper we show that the orbit $Sx$ is dense in $V$, if the number of generators of $S$ is minimal among all subsemigroups $S$ of $GL(V)$ having a somewhere
dense orbit, except if $\dim V$ is odd. For a precise formulation and further information about this see corollary \ref{cor3}.

After Theorem \ref{sdenseLie1} it remains to compute the minimal number of generators of a dense subsemigroup (or subgroup) of the abelian Lie group $G^0$. This
number is one plus the dimension of the following group, namely $G^0$ modulo its maximal compact subgroup; see section 4. The results of tables \ref{table1} and
\ref{table2} and similar ones follow.

\begin{table}[htdp]
\caption{Minimum number of generators of an abelian (semi)group of real matrices which has a dense or a somewhere dense orbit.} \label{table1}

\begin{center}
\scalebox{0.67}{
\begin{tabular}{|c|c|c|c|c|c|}  \hline
\hline \multicolumn{6}{|c|}{\textbf{Minimum number of generators of an abelian (semi)group of real matrices}} \\
\hline \hline \multicolumn{2}{|c|}{\textbf{\phantom{.xxxxxx}Commuting\phantom{xxxxx.}}} & \multicolumn{2}{|c|}{\textbf{\phantom{..}Diagonal}\phantom{.}} & \multicolumn{2}{c|}{\textbf{Triangular non-diagonalizable}} \\
\hline  \multicolumn{2}{|c|}{{\boldmath $(n+2)/2$}, \textbf{if} {\boldmath $n$} \textbf{is even}} & \multicolumn{2}{|c|}{\multirow{2}{*}{\boldmath $n+1$}} & \multicolumn{2}{|c|}{\multirow{2}{*}{\boldmath $n+1$}}\\
\multicolumn{2}{|c|}{{\boldmath $(n+3)/2$}, \textbf{if} {\boldmath $n$} \textbf{is odd}} & \multicolumn{2}{|c|}{} & \multicolumn{2}{|c|}{}\\
\hline \hline \multicolumn{6}{|c|}{\textbf{Triangular Toeplitz non-diagonalizable}}  \\
\hline \multicolumn{6}{|c|}{\boldmath $n+1$} \\
\hline
\end{tabular}
}
\end{center}

\end{table}

\begin{table}[htdp]
\caption{Minimum number of generators of an abelian (semi)group of complex matrices which has a dense or a somewhere dense orbit.} \label{table2}

\begin{center}
\scalebox{0.64}{
\begin{tabular}{|c|c|c|c|c|c|}  \hline
\hline \multicolumn{6}{|c|}{\textbf{Minimum number of generators of an abelian (semi)group of complex matrices}} \\
\hline \hline \multicolumn{2}{|c|}{\textbf{\phantom{xxx}Commuting\phantom{xxx}}} & \multicolumn{2}{|c|}{\textbf{\phantom{.xxx.}Diagonal\phantom{.xxx}}} & \multicolumn{2}{c|}{\textbf{Triangular non-diagonalizable}} \\
\hline  \multicolumn{2}{|c|}{\boldmath $n+1$} & \multicolumn{2}{|c|}{\boldmath $n+1$} & \multicolumn{2}{|c|}{\boldmath $n+2$} \\
\hline \hline \multicolumn{6}{|c|}{\textbf{Triangular Toeplitz non-diagonalizable}}  \\
\hline \multicolumn{6}{|c|}{\boldmath $n+2$} \\
\hline
\end{tabular}
}
\end{center}

\end{table}

We would like to stress that for complex matrices an orbit of a point is dense if it is somewhere dense but this is not the case for real matrices, see
\cite{Feldman3}. In the same work Feldman gave a criterion for when a somewhere dense orbit is dense  in terms of spectral properties of the elements of the semigroup
\cite[Theorem 5.5]{Feldman3}. In section 5 we give an explanation for this difference between the real and the complex case based on a structure theorem for algebraic
groups and in a forthcoming paper \cite{AbMa1} will give more details.

The contents of the paper are as follows. In section 2 we show that a somewhere dense subsemigroup of a connected abelian Lie group is actually dense. In section 3 we
give the proof of theorem \ref{sdenseLie1}. This proof follows from a basic theorem about actions of algebraic groups \cite[$\S$ 3.18]{BoTi}. In section 4 we compute
the minimal number of generators of a dense subgroup and a dense subsemigroup of a connected abelian Lie group. This is an easy application of Kronecker's theorem. In
section 5 we give applications for the case of hypercyclic finitely generated abelian (semi)groups of matrices with various types of generators.

So in this paper we bring together results about the density of subsemigroups of abelian Lie groups (section 2), the minimal number of topological generators of
abelian Lie groups (section 4) and a result about actions of algebraic groups (section 3), to solve the problem of determining the minimal number of commuting
matrices such that the semigroup they generate has a (somewhere) dense orbit. The results of sections 2 and 4 may be of independent interest.

\section{A somewhere dense finitely generated subsemigroup of an abelian connected Lie group is dense}
\begin{theorem} \label{sdenseLie}
Let $G$ be an abelian Lie group whose connected component $G^0$ is of finite index in $G$. Let $S$ be a finitely generated subsemigroup of $G$, which is somewhere
dense in $G$, i.e. its closure $\overline{S}$ contains a non-empty open subset of $G$. Then $\overline{S}$ is a subgroup of $G$ and contains $G^0$.
\end{theorem}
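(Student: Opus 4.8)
The plan is to reduce the statement to a density assertion for a finitely generated \emph{group}, extract a positivity (cone) constraint from somewhere‑density, and then run a Kronecker/equidistribution argument.

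\emph{Reduction.} First I would observe that $H:=\overline S$ is a closed subsemigroup of $G$ and that $G_1:=HH^{-1}$ is a subgroup of $G$ (abelianness makes $HH^{-1}$ closed under multiplication and inversion). Since $H$ has nonempty interior $W$, the set $WW^{-1}$ is a neighbourhood of $e$, so $G_1$ is an open — hence closed — subgroup; therefore $G^0\subseteq G_1$ and $[G_1:G^0]<\infty$. As $S\subseteq H\subseteq G_1$ and $\overline{\langle S\rangle}\supseteq\overline S\,\overline S^{-1}=G_1$, the group generated by a finite generating set $g_1,\dots,g_k$ of $S$ is dense in $G_1$. Replacing $G$ by $G_1$, it suffices to show: if $G$ is abelian Lie with $[G:G^0]<\infty$, $\langle g_1,\dots,g_k\rangle$ is dense in $G$, and $S=\langle g_1,\dots,g_k\rangle$ (products of $\ge 1$ of the $g_i$) has $\operatorname{int}\overline S\ne\emptyset$, then $\overline S=G$ — for then $\overline S=G_1$ is a subgroup of the original group containing $G^0$.

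\emph{The cone condition (where finite generation enters).} Let $K$ be the maximal compact subgroup of $G$; then $G/K$ is an abelian Lie group with finite component group and no compact subgroups, hence $G/K\cong\mathbb R^{a}$, and every continuous homomorphism $G\to\mathbb R$ has the form $\ell\circ\pi$ with $\pi\colon G\to\mathbb R^{a}$ the quotient map and $\ell\in(\mathbb R^{a})^{*}$. I claim $\pi(g_1),\dots,\pi(g_k)$ positively span $\mathbb R^{a}$. If not, there is $\ell\ne0$ with $\ell(\pi(g_i))\ge0$ for all $i$, giving a nonzero continuous surjective (hence open) homomorphism $\chi=\ell\circ\pi\colon G\to\mathbb R$ with $\chi(g_i)\ge0$. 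Then $\chi(S)$ is a finitely generated subsemigroup of $[0,\infty)$, hence discrete and closed (any bounded subset of it is finite), so $\chi(\operatorname{int}\overline S)$ is a nonempty open subset of $\mathbb R$ lying inside the discrete set $\overline{\chi(S)}=\chi(S)$ — a contradiction. Hence there is a relation $\sum_i\Lambda_i\pi(g_i)=0$ with all $\Lambda_i>0$ (from $-\pi(g_j)$ lying in the cone one obtains, for each $j$, a relation with positive $j$‑th coefficient; add them up).

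\emph{Kronecker approximation — the main obstacle.} The technical heart is to produce integer vectors $m^{(n)}=(m^{(n)}_1,\dots,m^{(n)}_k)$ with $m^{(n)}_i\to\infty$ for every $i$ and $\prod_i g_i^{m^{(n)}_i}\to e$ in $G$. Put $M=[G:G^0]$, so $g_i^{M}\in G^0\cong\mathbb R^{a+b}/\mathbb Z^{b}$; lift $g_i^{M}$ to $\hat g_i\in\mathbb R^{a+b}$ (with $\mathbb R^{a}$‑part $M\pi(g_i)$) and set $\Theta\colon\mathbb R^{k}\to G^0$, $\Theta(\mu)=[\sum_i\mu_i\hat g_i]$, so that $\Theta(m)=\prod_i g_i^{Mm_i}$ for $m\in\mathbb Z^{k}$. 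Because $\sum_i\Lambda_i\pi(g_i)=0$, the ray $\{\Theta(t\Lambda):t\ge0\}$ stays inside the compact subgroup $\{0\}\times\mathbb T^{b}$ of $G^0$. Applying the standard fact that the closure of a one‑parameter subsemigroup of a compact abelian group is a closed connected subgroup (a subtorus) to the homomorphism $t\mapsto(t\Lambda\bmod\mathbb Z^{k},\ \Theta(t\Lambda))\in\mathbb T^{k}\times(\{0\}\times\mathbb T^{b})$ — whose image is positive‑dimensional because $\Lambda\ne0$ — one gets (as usual, since one may pass to $t\ge N$) sequences $t_n\to\infty$ with $\operatorname{dist}(t_n\Lambda,\mathbb Z^{k})\to0$ and $\Theta(t_n\Lambda)\to e$ simultaneously. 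Choosing $m^{(n)}\in\mathbb Z^{k}$ with $\|t_n\Lambda-m^{(n)}\|\to0$ gives $m^{(n)}_i\approx t_n\Lambda_i\to\infty$ and, by continuity of $\Theta$, $\prod_i g_i^{Mm^{(n)}_i}=\Theta(m^{(n)})=\Theta(t_n\Lambda)+\Theta(m^{(n)}-t_n\Lambda)\to e$. I expect this to be the delicate point: one needs integer points whose \emph{error} from the ray $t\Lambda$ tends to $0$ (not merely the $O(1)$ error of naive rounding) while all coordinates go to $+\infty$, which is exactly what lifting the ray to the torus $\mathbb T^{k}$ provides.

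\emph{Conclusion.} Given $x\in G$ and a neighbourhood $U$ of $x$, density of $\langle g_1,\dots,g_k\rangle$ gives $\gamma=\prod_i g_i^{c_i}\in U$ with $c_i\in\mathbb Z$. For $n$ large, $Mm^{(n)}_i+c_i\ge1$ for all $i$, so $\gamma_n:=\prod_i g_i^{Mm^{(n)}_i+c_i}\in S$, and $\gamma_n=\gamma\cdot\prod_i g_i^{Mm^{(n)}_i}\to\gamma\in U$, hence $\gamma_n\in U$ for $n$ large. Thus $S$ is dense, i.e. $\overline S=G$, which finishes the proof.
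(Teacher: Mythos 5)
Your proof is correct, but it reaches the conclusion by a genuinely different mechanism than the paper's. The one ingredient you share is the positivity step: like the paper's lemmas \ref{sdensereals} and \ref{sdenseconern}, you combine a separating-hyperplane argument with the fact that a finitely generated subsemigroup of $[0,+\infty)$ is discrete, applied here to the vector group quotient $G/K\cong\mathbb{R}^a$, to extract a relation $\sum_i\Lambda_i\pi(g_i)=0$ with all $\Lambda_i>0$. After that the routes diverge. The paper works in the universal cover: it proves the case $G=\mathbb{R}^n$ by showing the closed cone spanned by $S$ is convex and dense and that $\overline S$ contains the whole line $\mathbb{R}u$ through any interior point $u$ with $-u\in C(S)$ (via the dichotomy of lemma \ref{sdenserealposneg}), hence a neighbourhood of $0$; it then lifts along $\mathbb{R}^n\to G$ for connected $G$ and handles the component group by the $m$-th power map $G\to G^0$. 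You instead pass to the open subgroup $G_1=\overline S\,\overline S^{-1}$ (which is exactly the subgroup the theorem asserts $\overline S$ to be), observe that the group generated by $S$ is dense in $G_1$, and upgrade group-density to semigroup-density by a recurrence argument in the compact group $\mathbb{T}^k\times T$: returns to the identity along the ray $t\Lambda$ with all integer exponents tending to $+\infty$. Your identification of the delicate point is accurate --- naive rounding only gives $O(1)$ error, and lifting the ray to $\mathbb{T}^k$ is what yields the needed $o(1)$ simultaneous approximation; together with the ``compact subsemigroup is a group'' fact (which the paper itself uses in lemma \ref{torus}) this step is sound. Comparing the two: the paper's route is more elementary and self-contained (it needs nothing beyond the universal cover of a connected abelian Lie group), whereas yours invokes the structure theorem $G\cong\mathbb{R}^a\times K$ for abelian Lie groups with finite component group but in return isolates a reusable Kronecker-type statement, close in spirit to the paper's section 4 (lemma \ref{realgen}), and delivers the conclusion $\overline S=\overline S\,\overline S^{-1}$ directly without the $m$-th power reduction.
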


We first deal with some special cases.

\begin{lemma}\label{sdenserealposneg}
Let $S$ be a subsemigroup of the additive group $\mathbb{R}$, which contains both a negative and a positive real number. Then $S$ is either a discrete cyclic subgroup
of $\mathbb{R}$ or $S$ is dense in $\mathbb{R}$. In particular, the closure of $S$ is a group.
\end{lemma}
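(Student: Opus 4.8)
The plan is to separate two cases according to whether the semigroup $S$ is discrete or not, working throughout with the closure $\overline{S}$, which is a closed subsemigroup of $\mathbb{R}$ still containing a positive element $p$ and a negative element $-q$ (with $p,q>0$). The first step is to observe that $\overline{S}$ is a subgroup: for any $s\in\overline{S}$ and any $n\ge 0$ we have $ns+kp\in\overline{S}$ for all $k\ge 0$ and $ns-mq\in\overline{S}$ for all $m\ge 0$, so by choosing $k,m$ appropriately we can approximate $-s$; more directly, $-p = \lim_{m\to\infty}(\text{something})$ — the clean way is to note that since $p,q\in\overline{S}$ we get $np, -mq\in\overline{S}$ for all $n,m\in\mathbb{N}$, hence every integer combination $np-mq$ lies in $\overline{S}$, and the set $\{np-mq : n,m\in\mathbb{N}\}$ is a subgroup of $\mathbb{R}$ (it is closed under negation since swapping the roles negates, and closed under addition); call this subgroup $H$. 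Then $H\subseteq\overline{S}$, and since $\overline{S}$ is a subsemigroup containing the subgroup $H$ with $p\in H$, we can write any $s\in\overline{S}$ as $s = (s+p) - p$ and note $-s = (-p) + (p - s)$... the point to nail down is simply that $\overline{S}+H\subseteq\overline{S}$ and $H$ contains both signs, forcing $\overline{S}$ to be a union of cosets of $H$ closed under addition, hence a subgroup.

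The second step is the dichotomy. Every subgroup of $\mathbb{R}$ is either cyclic (infinite cyclic, i.e. $\alpha\mathbb{Z}$ for some $\alpha>0$) and discrete, or dense in $\mathbb{R}$ — this is the classical structure theorem for subgroups of $\mathbb{R}$, proved by looking at $\inf\{g\in\overline{S} : g>0\}$ and checking whether it is attained (cyclic case) or zero (dense case). Applying this to the subgroup $\overline{S}$ gives: either $\overline{S}$ is discrete cyclic, or $\overline{S} = \mathbb{R}$, i.e. $S$ is dense. In the discrete case $\overline{S} = S$ (a discrete subset of $\mathbb{R}$ equal to its own closure contains $S$, and $S$ is dense in it, so $S = \overline{S}$), so $S$ itself is a discrete cyclic subgroup. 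In the dense case the conclusion is immediate. In both cases the closure of $S$ is a group, which is the final assertion.

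I do not expect a serious obstacle here; the only point requiring a little care is the very first step — verifying rigorously that a closed subsemigroup of $\mathbb{R}$ containing elements of both signs is actually a subgroup. The slickest route is: let $p,-q\in\overline{S}$ with $p,q>0$; then for each $n\in\mathbb{N}$ the element $np\in\overline{S}$, and adding the negative element $-q$ repeatedly, $np - mq\in\overline{S}$ for all $m\le$ roughly $np/q$; letting $n,m\to\infty$ with $np-mq\to -p$ (possible since $\{np-mq\}$ has $0$ as a limit point or is a dense/cyclic set of reals) shows $-p\in\overline{S}$, and similarly $-s\in\overline{S}$ for every $s\in\overline{S}$ by the same approximation using that $\overline{S}$ absorbs $p$ and $-q$. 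Once $\overline{S}$ is known to be a subgroup, invoking the structure of subgroups of $\mathbb{R}$ finishes the proof cleanly.
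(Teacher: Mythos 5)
Your overall plan (pass to the closure, show $\overline{S}$ is a subgroup, then invoke the classical dichotomy that a closed subgroup of $\mathbb{R}$ is either discrete cyclic or all of $\mathbb{R}$, and note that $S=\overline{S}$ in the discrete case) would be a perfectly good alternative route to the lemma --- but the one step you yourself flag as needing care is exactly where the argument breaks down, and none of the justifications you offer for it is sound. The central claim that $H=\{np-mq : n,m\in\mathbb{N}\}$ is a subgroup of $\mathbb{R}$ is false in general: the negative of $np-mq$ is $mq-np$, which lies in the \emph{different} set $\{nq-mp\}$; for it to lie in $H$ you would need nonnegative integers $n',m'$ with $(n'+n)p=(m'+m)q$, i.e.\ $p/q\in\mathbb{Q}$. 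Concretely, for $p=1$, $q=\sqrt{2}$ the element $1\in H$ but $-1\notin H$, since $-1=n-m\sqrt{2}$ has no solution in nonnegative integers. Your fallback argument ("letting $n,m\to\infty$ with $np-mq\to -p$, possible since $\{np-mq\}$ has $0$ as a limit point or is a dense/cyclic set of reals") is circular: the assertion that the semigroup generated by $p$ and $-q$ accumulates at $0$ or is cyclic is precisely the dichotomy of the lemma itself (in the two-generator case), so it cannot be assumed. Likewise, the coset argument at the end of your first step only works once you already know $H$ (or some nontrivial subgroup inside $\overline{S}$) is a group.

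What is missing is an actual proof that a subsemigroup of $\mathbb{R}$ containing $a>0$ and $-b<0$ has elements of both signs arbitrarily close to a common infimum. The paper does this directly: set $m_{+}=\inf\{s\in S: s>0\}$ and $m_{-}=\inf\{|s|: s\in S,\ s<0\}$; since $a-nb\in S$ for all $n\in\mathbb{N}$, taking $n$ maximal with $a-nb>0$ produces a positive element of $S$ that is $\leq b$, so $m_{+}\leq m_{-}$, and symmetrically $m_{-}\leq m_{+}$. If the common value is $0$ one gets density; if it is $a>0$ one shows $S=a\cdot\mathbb{Z}$ by approximating $-na$ from within $S$ and killing the remainder in $b=na+r$. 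Some argument of this kind (or an explicit case split on $p/q$ rational versus irrational, using Kronecker in the irrational case) is needed to make your first step correct; as written, the proposal has a genuine gap at its crux.
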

\begin{proof}
Let $m_{+}:=\inf \{a\, ;\, a\in S, a>0\}$ and $m_{-}:=\inf \{|a|\, ;\, a\in S, a<0\}$. We first claim that $m_{+}=m_{-}$. If $a>0$ and $-b<0$ are elements of $S$ then
$a-nb\in S$ for every $n\in\mathbb{N}$. Hence there is an element $a'\in S$ with $0<a'\leq b$. So $m_{+} \leq m_{-}$. Similarly $-b+na\in S$ for every
$n\in\mathbb{N}$. Hence there is an element $-b'\in S$ with $0<b'\leq a$. Thus $m_{-} \leq m_{+}$.

Now there are two cases. If $m_{+}=m_{-}=0$ then $S$ is dense in $\mathbb{R}$. If $m_{+}=m_{-}=a>0$ we claim that $S= a \cdot \mathbb{Z}$. Suppose $b\in S$. Let
$b=na+r$ with $n\in\mathbb{Z}$ and $0\leq r<a$. Our claim is that $r=0$. By definition of $m_{+}=m_{-}$ there is an element $c$ of $S$ arbitrarily close to $-na$,
hence $b+c\in S$ is arbitrarily close to $r$, so $r$ must be zero.
\end{proof}

The case of the additive group $\mathbb{R}$ of theorem \ref{sdenseLie} follows from the following lemma.

\begin{lemma} \label{sdensereals}
A finitely generated subsemigroup of the additive group $\mathbb{R}$ is either dense in $\mathbb{R}$ or has no accumulation point in $\mathbb{R}$.
\end{lemma}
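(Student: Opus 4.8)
The plan is to fix a finite generating set $a_1,\dots,a_k$ of $S$ (so every element of $S$ is a combination $n_1a_1+\dots+n_ka_k$ with $n_i\in\mathbb{N}_0$, not all zero) and split into two cases according to the signs of the $a_i$.

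First I would handle the case in which all generators have the same sign, including the degenerate situations where some or all of them vanish; by symmetry assume all $a_i\ge 0$. Then every element of $S$ is a sum of nonnegative terms $n_ia_i$, so if such an element lies in a fixed bounded interval $[-M,M]$ we must have $n_ia_i\le M$ for each $i$. Hence only finitely many tuples $(n_i)_{i:\,a_i>0}$ can occur, and the coordinates with $a_i=0$ do not affect the value; therefore $S\cap[-M,M]$ is a finite set for every $M>0$, and $S$ has no accumulation point in $\mathbb{R}$. The case of all $a_i\le 0$ is identical after replacing $S$ by $-S$.

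Second, if some generator is positive and some is negative, then $S$ itself contains a positive and a negative real number, and Lemma \ref{sdenserealposneg} applies verbatim: $S$ is either a discrete cyclic subgroup of $\mathbb{R}$, which plainly has no accumulation point, or $S$ is dense in $\mathbb{R}$. In either case the asserted dichotomy holds, which completes the proof.

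I do not expect a genuine obstacle here, only two points worth care. One is to notice that finite generation is actually used in the first case: an infinitely generated subsemigroup of $(0,\infty)$ may accumulate without being dense in $\mathbb{R}$ (for instance the one generated by $\{1/n:n\in\mathbb{N}\}$), and it is precisely the uniform positive lower bound $\min\{a_i:a_i>0\}$ on the generators, together with the resulting finiteness of $S$ in bounded intervals, that rules this out. The other is simply to keep track of the zero generators and the trivial cases $S=\{0\}$ or $S=\varnothing$, for which the conclusion is immediate. By contrast the mixed-sign case requires no finiteness hypothesis at all and reduces instantly to the preceding lemma.
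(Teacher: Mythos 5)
Your proof is correct and follows essentially the same route as the paper: the mixed-sign case is delegated to Lemma \ref{sdenserealposneg}, while in the single-sign case finite generation forces $S$ to meet every bounded interval in only finitely many points (the paper phrases this via the minimal positive generator $s_0$ and a crude count of elements of absolute value at most $n s_0$). Your extra care with zero generators and the explicit bounding of the exponents $n_i$ is a harmless refinement of the same argument.
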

\begin{proof}
Let $S$ be a finitely generated subsemigroup of the additive group $\mathbb{R}$. If $S$ contains both positive and negative elements, then $S$ is either a discrete
cyclic subgroup of $\mathbb{R}$ or $S$ is dense in $\mathbb{R}$, by lemma \ref{sdenserealposneg}. If $S$ contains only non-negative elements, then there is a minimal
element among the positive generators, say $s_0$. If $t$ is the number of generators of $S$ then there are at most $t\cdot n$ elements of $S$ of absolute value $\leq
n\cdot s_0$. Similarly, if $S$ contains only non-positive elements.
\end{proof}

\begin{lemma} \label{sdenseconern}
Let $S$ be a finitely generated subsemigroup of $\mathbb{R}^n$. If $\overline{S}$ has non-empty interior, then the cone $C(S)$ spanned by $S$, i.e. $C(S):=\{ r\cdot
s\, ;\, r\in\mathbb{R}, r\geq 0, s\in S \}$ is dense in $\mathbb{R}^n$.
\end{lemma}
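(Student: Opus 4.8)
The plan is to argue by contradiction: assuming $C(S)$ is not dense in $\mathbb{R}^n$, I will produce a countable union of affine hyperplanes containing $\overline{S}$, which forces $\overline{S}$ to have empty interior, contrary to hypothesis.

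The first step reduces the statement to separation of a convex cone. Let $s_1,\dots,s_k$ generate $S$ as a semigroup. Clearing denominators, any rational convex combination $\sum_j (p_j/q)\,t_j$ of elements $t_j\in S$ equals $\tfrac1q\sum_j p_j t_j$ with $\sum_j p_j t_j\in S$, so it lies in $C(S)$, and so does any nonnegative real multiple of it. Applying this to the generators $s_i$ themselves, $C(S)$ contains all nonnegative rational combinations of $s_1,\dots,s_k$, which are dense in the convex cone $\Gamma$ generated by $s_1,\dots,s_k$; since $C(S)\subseteq\Gamma$ trivially and $\Gamma$ is closed (it is polyhedral), we get $\overline{C(S)}=\Gamma$. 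Thus, if $C(S)$ is not dense, $\Gamma\neq\mathbb{R}^n$; pick a point $p\notin\Gamma$, and strictly separate $p$ from the closed convex set $\Gamma$. Using that $\Gamma$ is a cone containing the origin, the separating functional may be chosen nonnegative on $\Gamma$, so there is a linear functional $\phi\neq 0$ with $\phi(s)\geq 0$ for all $s\in S$.

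The second step pushes everything forward by $\phi$. The image $\phi(S)$ is a subsemigroup of the additive group $\mathbb{R}$ generated by $\phi(s_1),\dots,\phi(s_k)$, hence finitely generated, and it is contained in $[0,\infty)$, so it is not dense in $\mathbb{R}$. By Lemma~\ref{sdensereals} it therefore has no accumulation point, i.e. $\phi(S)$ is a closed, discrete, countable subset of $\mathbb{R}$. Consequently $\mathbb{R}\setminus\phi(S)$ is open and dense in $\mathbb{R}$; since a nonzero linear functional is an open map, $\phi^{-1}(\mathbb{R}\setminus\phi(S))$ is open, and it is dense (the $\phi$-image of any ball is a nonempty open interval, which meets $\mathbb{R}\setminus\phi(S)$). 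Hence $\phi^{-1}(\phi(S))$ is a closed subset of $\mathbb{R}^n$ with empty interior — concretely a countable union of affine hyperplanes parallel to $\ker\phi$. Since $S\subseteq\phi^{-1}(\phi(S))$ and this set is closed, $\overline{S}\subseteq\phi^{-1}(\phi(S))$, so $\overline{S}$ has empty interior, the desired contradiction.

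The only genuinely delicate point is the identity $\overline{C(S)}=\Gamma$ in the first step: one must know that $\overline{C(S)}$ — although $C(S)$ itself need not be convex — is a proper closed \emph{convex} cone, since otherwise there need be no supporting hyperplane at the origin (e.g. a union of two lines is a closed cone admitting no nonzero nonnegative functional). This is exactly where finite generation of $S$ is used, through the clearing-denominators identity. Everything afterwards is routine: the reduction to $\mathbb{R}$ uses only Lemma~\ref{sdensereals}, and the final conclusion is the elementary fact that a ball in $\mathbb{R}^n$ cannot be covered by countably many hyperplanes.
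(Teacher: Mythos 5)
Your proof is correct and follows essentially the same route as the paper: replace $C(S)$ by a closed convex cone, separate it from a missed point by a linear functional $\phi$ that is nonnegative on $S$, and then combine Lemma \ref{sdensereals} applied to the finitely generated semigroup $\phi(S)\subset[0,\infty)$ with the openness of $\phi$ to contradict the nonempty interior of $\overline{S}$. The only cosmetic difference is that you identify $\overline{C(S)}$ with the polyhedral cone spanned by the finitely many generators (using closedness of finitely generated cones), whereas the paper proves directly that $\overline{C(S)}$ is convex via the same rational-combination trick applied to pairs of elements of $S$ --- a step which, contrary to your closing remark, does not need finite generation; that hypothesis is used only through Lemma \ref{sdensereals}.
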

\begin{proof}
We first claim that the closure $\overline{C(S)}$ of $C(S)$ is actually a \textit{convex cone}. Let $a$, $b$ be elements of $S$. Then all non-negative integral linear
combinations of $a$ and $b$ are in $S$ hence all non-negative rational linear combinations of $a$ and $b$ are in $C(S)$ and hence all non-negative real linear
combinations of $a$ and $b$ are in $\overline{C(S)}$. It follows that the convex cone spanned by two elements of $C(S)$ is contained in $\overline{C(S)}$, therefore
$\overline{C(S)}$ is convex.

We now show that $\overline{C(S)}$ is actually all of $\mathbb{R}^n$. If not there is a linear form $l$ on $\mathbb{R}^n$ such that $l(\overline{C(S)})\geq 0$, by the
separating hyperplane theorem. The subsemigroup $l(S)$ of $\mathbb{R}$ has only non-negative values and, being finitely generated, must be discrete by lemma
\ref{sdensereals}. But $\overline{S}$ has non-empty interior hence so does $\overline{l(S)}\supset l(\overline{S})$, since $l$ is an open map. This contradicts our
hypothesis.
\end{proof}

\noindent \textit{Proof of Theorem \ref{sdenseLie}.}  We first show the theorem for the case $G=\mathbb{R}^n$. Let $U$ be a non-empty open subset of $\overline{S}$.
It suffices to show that $\overline{S}$ intersects $-U$, say $-u\in -U\cap \overline{S}$, $u\in U$, because then the semigroup $\overline{S}$ contains the
neighborhood $-u + U$ of zero and hence $\overline{S}=\mathbb{R}^n$. We show in fact that $-U\cap C(S)$ is contained in $\overline{S}$. Note that the set $-U\cap
C(S)$ is non-empty by lemma \ref{sdenseconern}. So let $-u\in -U\cap C(S)$. We may assume that $u\neq 0$. Then the closed subsemigroup $\mathbb{R} u\cap \overline{S}$
of the line $\mathbb{R} u$ contains points on either side of zero, hence it is either all of $\mathbb{R} u$ or has no accumulation point, by lemma
\ref{sdenserealposneg}. The latter case is impossible since $u$ is an inner point of $\overline{S}$. Thus $\mathbb{R} u\subset \overline{S}$.

Now let $G$ be a connected abelian Lie group. So there is a covering homomorphism $\pi:\mathbb{R}^n\to G$ whose kernel is a finitely generated discrete subgroup of
$\mathbb{R}^n$. Hence if $S$ is a finitely generated subsemigroup of $G$ whose closure contains a non-empty open set, then so is $\pi^{-1}(S)$ in $\mathbb{R}^n$.
Hence $\pi^{-1}(S)$ is dense in $\mathbb{R}^n$, thus $S$ is dense in $G$.

The general case is now reduced to the case of a connected group by the following lemma and the observation that the image of $\overline{S}$ in $G/G^0$, under the
quotient map, is a subsemigroup of the finite group $G / G^0$ and is hence a subgroup. \qed

\begin{lemma}
Let $G$ be an abelian Lie group whose connected component $G^0$ is of finite index in $G$. If $S$ is a finitely generated subsemigroup of $G$ which is somewhere dense
in $G$ then $S\cap G^0$ contains a finitely generated subsemigroup of $G^0$ which is somewhere dense in $G^0$.
\end{lemma}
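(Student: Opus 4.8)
The plan is to replace $S$ by the subsemigroup generated by the $n$-th powers of its generators, where $n:=[G:G^0]$, and to transport the somewhere-density across the power map $\varphi\colon G\to G$, $g\mapsto g^n$. Concretely, fix generators $s_1,\dots,s_t$ of $S$ and let $S'$ be the subsemigroup of $G$ generated by $s_1^{\,n},\dots,s_t^{\,n}$. The bookkeeping is immediate: each $s_i^{\,n}$ lies in $S$ because $S$ is a semigroup, and lies in $G^0$ because the image of $s_i^{\,n}$ in the order-$n$ group $G/G^0$ is trivial by Lagrange's theorem; since $G^0$ is a subgroup it follows that $S'\subseteq S\cap G^0$, and $S'$ is by construction a finitely generated subsemigroup of $G^0$. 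Everything thus reduces to showing that $S'$ is somewhere dense in $G^0$.

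For that I would use two facts about $\varphi$. First, since $G$ is abelian, $\varphi$ is a Lie group endomorphism; hence for a word $w=s_{i_1}\cdots s_{i_m}\in S$ one has $\varphi(w)=w^n=s_{i_1}^{\,n}\cdots s_{i_m}^{\,n}\in S'$, so $\varphi(S)\subseteq S'$, and moreover $\varphi(G)\subseteq G^0$ by the same Lagrange argument. Second, $\varphi$ is an open map: its differential at the identity is multiplication by $n$ on $\mathrm{Lie}(G)$, hence an isomorphism, and combined with the relation $\varphi\circ L_g=L_{\varphi(g)}\circ\varphi$ (where $L_g$ is left translation by $g$; this again uses that $G$ is abelian) it follows that $\varphi$ is a local diffeomorphism at every point, in particular open. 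Now choose a nonempty open $U\subseteq G$ with $U\subseteq\overline{S}$, which exists since $S$ is somewhere dense in $G$. Then $\varphi(U)$ is nonempty, open, and contained in $G^0$, while continuity of $\varphi$ gives $\varphi(U)\subseteq\varphi(\overline{S})\subseteq\overline{\varphi(S)}\subseteq\overline{S'}$. As $G^0$ is closed in $G$, the closure of $S'$ taken in $G^0$ equals $\overline{S'}$, so $\overline{S'}$ contains the nonempty open subset $\varphi(U)$ of $G^0$ — which is exactly the assertion that $S'$ is somewhere dense in $G^0$. (This is the lemma used in the final step of the proof of Theorem \ref{sdenseLie}.)

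The only steps with genuine content, as opposed to routine verification, are the two structural facts about $\varphi$: that it is an open map and that $\varphi(S)\subseteq S'$. The first is where one really uses that $G$ is a \emph{Lie} group — for a general topological group the power map need not be open — and, modest as it is, this is the main obstacle; it rests on the elementary computation of $d\varphi_e$ (via $\varphi(\exp X)=\exp(nX)$) together with the standard fact that a local diffeomorphism is open. The second fact is precisely where \emph{commutativity} of $S$ is needed, for otherwise the $n$-th power of a word need not decompose as a product of $n$-th powers of the letters. The remaining ingredients — Lagrange's theorem placing $S'$ inside $G^0$, and the inclusion $\varphi(\overline{S})\subseteq\overline{\varphi(S)}$ for a continuous map — are standard.
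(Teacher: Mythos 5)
Your proof is correct and follows essentially the same route as the paper: the paper's one-line argument also passes to the power map $x\mapsto x^{m}$ with $m=[G:G^{0}]$, noting it is an (open) covering homomorphism onto $G^{0}$ carrying $S$ to the subsemigroup generated by the $m$-th powers of the generators. You merely spell out the openness (local diffeomorphism) and the closure bookkeeping that the paper leaves implicit.
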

\begin{proof}
Let $m$ be the number of connected components of $G$. Then $x\mapsto x^m$ is a covering homeomorphism $G\to G^0$ with finite kernel which maps $S$ to the subsemigroup
$S^m=\{ s^m\, ;\, s\in S\}$ of $G^0$, which is somewhere dense in $G^0$ and generated by $\{ s_1^m,\ldots ,s_t^m \}$ if $S$ is generated by $\{ s_1,\ldots ,s_t\}$.
\end{proof}

We shall need the following fact later on.

\begin{lemma} \label{genablie}
Let $G$ be an abelian Lie group with finite component group $G/G^0$.
\begin{enumerate}
\item[(a)] If $G$ has a dense sub(semi)group with $t$ generators then so does $G^0$.

\item[(b)] If $G^0$ has a dense sub(semi)group with $t_0$ generators and $G/G^0$ can be generated (as a group or semigroup, there is no difference) by $t_1$ elements
then $G$ can be generated by $\max (t_0,t_1)$ elements.
\end{enumerate}
\end{lemma}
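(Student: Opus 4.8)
The plan is to derive both parts from one observation about power maps on $G^0$. Write $m:=[G:G^0]$. Being a connected abelian Lie group, $G^0$ is isomorphic to $\mathbb{R}^p\times\mathbb{T}^q$ and hence divisible, so for every $k\ge 1$ the $k$-th power map $\phi_k\colon G^0\to G^0$, $x\mapsto x^k$, is a continuous surjective endomorphism. Two elementary facts will be used repeatedly: a continuous surjection $\phi\colon X\to Y$ carries dense subsets to dense subsets (because $\phi(\overline D)\subseteq\overline{\phi(D)}$); and, by commutativity, $\langle a_1^k,\dots,a_t^k\rangle=\{x^k:x\in\langle a_1,\dots,a_t\rangle\}=\phi_k\big(\langle a_1,\dots,a_t\rangle\big)$. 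Together these give: if $a_1,\dots,a_t$ generate a dense sub(semi)group of $G^0$, then so do $a_1^m,\dots,a_t^m$.

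Part (a) is then immediate. If $s_1,\dots,s_t$ generate a dense sub(semi)group $S$ of $G$, then $x\mapsto x^m$ maps $G$ into $G^0$ and, by divisibility of $G^0$, onto $G^0$; hence $\langle s_1^m,\dots,s_t^m\rangle=\{x^m:x\in S\}$ is a dense sub(semi)group of $G^0$ with $t$ generators (and is a group if $S$ is).

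For part (b), read as the statement that $G$ has a dense sub(semi)group with $N:=\max(t_0,t_1)$ generators, I would argue as follows. Choose $a_1,\dots,a_{t_0}$ generating a dense sub(semi)group of $G^0$ and set $a_i:=e$ for $t_0<i\le N$. Choose $c_1,\dots,c_{t_1}\in G$ whose images in $G/G^0$ generate it, taking moreover $c_i^m=e$: this is possible because any lift $c_i'$ of the prescribed coset satisfies $(c_i')^m\in G^0$, and by divisibility of $G^0$ there is $y_i\in G^0$ with $y_i^m=(c_i')^{-m}$, so $c_i:=c_i'y_i$ works; set $c_i:=e$ for $t_1<i\le N$. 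Now put $g_i:=a_ic_i$ for $1\le i\le N$ and let $T$ be the sub(semi)group of $G$ they generate. The image of $T$ in $G/G^0$ is generated by the images of the $c_i$, hence is all of $G/G^0$. Since $G$ is abelian, $g_i^m=a_i^mc_i^m=a_i^m$, so $\langle g_1^m,\dots,g_N^m\rangle=\langle a_1^m,\dots,a_N^m\rangle$ is contained in $T$ and, by the first paragraph, is dense in $G^0$. Therefore $\overline T\supseteq G^0$; and since $\overline T$ is a subsemigroup that meets every coset of $G^0$ and satisfies $G^0\cdot\overline T\subseteq\overline T$, it equals $G$. The same elements $g_1,\dots,g_N$ also generate a dense subgroup of $G$.

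I expect the one genuinely delicate point to be the choice of the lifts $c_i$. A careless lift would leave an uncontrolled factor $c_i^m\in G^0$ in $g_i^m$, and then there would be no reason for $\langle g_1^m,\dots,g_N^m\rangle$ to be dense in $G^0$; arranging $c_i^m=e$ by means of the divisibility of $G^0$ is exactly what makes the argument close. (Alternatively, one could invoke that an abelian Lie group $G$ with finite $G/G^0$ splits as $G^0\times(G/G^0)$, since $H^2\big(G/G^0,G^0\big)=0$, and take $g_i=(a_i,f_i)$ with the $f_i$ generating $G/G^0$; the torsion-lift argument avoids this.)
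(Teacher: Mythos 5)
Your proof is correct, and it runs on the same engine as the paper's, namely the power map $x\mapsto x^{m}$ with $m=|G/G^{0}|$; in particular your part (a) is essentially the paper's argument (the paper gets surjectivity of $x\mapsto x^{m}\colon G\to G^{0}$ from the covering property of this map, you get it from divisibility of $G^{0}$ --- same content). In part (b), however, your construction is genuinely different from the paper's. The paper chooses, for each index $i\le\min(t_{0},t_{1})$, a single element in the prescribed component $\overline{g}_{i}$ whose $m(\overline{g}_{i})$-th power is exactly the generator $h_{i}$ of the dense sub(semi)group of $G^{0}$ (such roots exist because the power map carries components onto components); density downstairs then transfers with no further lemma, since the $h_{i}$ are literally positive powers of the new generators. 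You instead take torsion lifts $c_{i}$ of the quotient generators (arranged so that $c_{i}^{m}=e$, again by divisibility) and multiply them into the $a_{i}$; the price is that $g_{i}^{m}=a_{i}^{m}$ rather than $a_{i}$, so you need the additional observation that the $m$-th powers of a dense sub(semi)group of the divisible connected group $G^{0}$ still generate a dense sub(semi)group --- which you prove correctly via continuity plus surjectivity of $\phi_{m}$. What your route buys is uniformity (one exponent $m$ for all indices, no case distinction $t_{0}\ge t_{1}$ versus $t_{1}\ge t_{0}$ as in the paper, padding by $e$ handling the maximum) and a reusable density-of-powers lemma; what the paper's root-taking buys is that no such extra lemma is needed. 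One caveat on your parenthetical alternative: the splitting $G\cong G^{0}\times(G/G^{0})$ is indeed true, but the relevant vanishing is $\mathrm{Ext}^{1}_{\mathbb{Z}}(G/G^{0},G^{0})=0$ (abelian extensions split because the divisible group $G^{0}$ is an injective $\mathbb{Z}$-module), not $H^{2}(G/G^{0},G^{0})=0$, which classifies all central extensions and can fail to vanish (e.g. $H^{2}\bigl((\mathbb{Z}/2)^{2},S^{1}\bigr)\neq 0$); since you explicitly avoid this route, it does not affect your proof.
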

\begin{proof}
(a) is proved as the preceding lemma.

(b) Let $\overline{g}=gG^0$ be a connected component of $G$. Suppose it has order $m(\overline{g})$ regarded as an element of the group $G/G^0$. Then given an element
$h\in G^0$ there is an element $g\in\overline{g}$ such that $g^m=h$ with $m=m(\overline{g})$. This follows from the fact that the group homomorphism $x\mapsto x^m$ of
$G$ to itself is a covering with finite kernel and hence maps connected components of $G$ onto connected components of $G$ and that the induced homomorphism $G/G^0
\to G/G^0$ is $\overline{x}\mapsto \overline{x}^m$. Let us call an element $g$ as above the $m(\overline{g})$-th root of $h$ and denote it by
$\sqrt[m(\overline{g})]{h}$. Our claim now follows from the following fact. Let $\{ h_1,\ldots ,h_{t_0}\}$ be a set of generators of a dense sub(semi)group of $G^0$
and let $\{ \overline{g}_1,\ldots ,\overline{g}_{t_1}\}$ be a set of generators of $G/G^0$. Define $t_2=\min (t_0,t_1)$ and $A_{\min}=\{
\sqrt[m(\overline{g}_1)]{h_1},\ldots,\sqrt[m(\overline{g}_{t_2})]{h_{t_2}}\}$. Then $A_{\min}\cup \{ h_{t_2+1},\ldots,h_{t_0}\}$ generates a dense sub(semi)group of
$G$ if $t_0\geq t_1=t_2$, and $A_{\min}\cup \{  g_{t_2+1},\ldots,g_{t_1}\}$ generates a dense sub(semi)group of $G$ if $t_1\geq t_0=t_2$, where $g_i$ is an arbitrary
element of $\overline{g}_i$ for $i>t_2$.
\end{proof}

\section{Proof of Theorem 1.1.}
\begin{observation} \label{observ}
Let $G$ be an abelian subgroup of $GL(V)$ and let $x\in V$ be a point which has a somewhere dense orbit. Then the isotropy group $G_y$ is trivial for every point $y$
of the orbit $G(x)$ of $x$.
\end{observation}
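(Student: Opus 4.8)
The plan is to reduce the statement to the single isotropy group $G_x$ and then use commutativity together with the density hypothesis to force $G_x$ to be trivial.

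First I would observe that $G_y=G_x$ for every $y\in G(x)$. Writing $y=hx$ with $h\in G$, if $g\in G_y$ then $ghx=hx$, so $h^{-1}gh\in G_x$; since $G$ is abelian, $h^{-1}gh=g$, whence $g\in G_x$, and the reverse inclusion is symmetric. So it suffices to show $G_x=\{e\}$. Next I would note that any $g\in G_x$ fixes the \emph{entire} orbit $G(x)$ pointwise: for every $h\in G$ we have $g(hx)=(gh)x=(hg)x=h(gx)=hx$, again using commutativity.

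Then I would pass to the closure. Every element of $GL(V)$ is a homeomorphism of $V$, so $g$ fixing $G(x)$ pointwise forces $g$ to fix $\overline{G(x)}$ pointwise. By hypothesis $\overline{G(x)}$ contains a nonempty open subset $U$ of $V$, so $g$ restricts to the identity on $U$. Finally, the linear map $g-\mathrm{id}_V$ vanishes on the open set $U$, and $U$ contains an affine basis of $V$ (for instance points $v_0,\,v_0+\varepsilon e_1,\dots,v_0+\varepsilon e_n$ for a basis $e_1,\dots,e_n$ of $V$ and $\varepsilon>0$ small), so $g-\mathrm{id}_V=0$, i.e.\ $g=e$. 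Hence $G_x=\{e\}$, and by the first step $G_y=\{e\}$ for all $y\in G(x)$.

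There is no serious obstacle here: the argument uses only continuity of linear maps and the elementary fact that a linear endomorphism vanishing on a nonempty open set is zero. The one point deserving a moment's care is that commutativity is used twice in an essential way — once to identify the stabilizer of $y=hx$ with that of $x$, and once to move $g$ past $h$ so that a single element of $G_x$ is seen to fix the whole orbit.
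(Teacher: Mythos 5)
Your proposal is correct and follows essentially the same route as the paper: commutativity gives $G_y=gG_xg^{-1}=G_x$, an element of $G_x$ then fixes the whole orbit, and a linear map fixing a somewhere dense set must be the identity. The paper phrases the last step slightly more directly (the orbit spans $V$ because it is somewhere dense, and the fixed-point set of $g$ is a linear subspace), whereas you pass through the closure and an affine basis inside an open set, but this is the same idea.
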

\begin{proof}
The isotropy group of a point $y=gx$ of the orbit $G(x)$ is $G_y=gG_xg^{-1}$, which equals $G_x$ since $G$ is abelian. So $h\in G_x$ fixes every point of the orbit
$G(x)$. But the orbit $G(x)$ spans $V$, since it is somewhere dense. Hence $h=1$.
\end{proof}

All the claims of theorem \ref{sdenseLie1}, except the dimension of $Gx$, follow from the following proposition and theorem \ref{sdenseLie}.

\begin{proposition} \label{prop32}
Let $S$ be a commutative subsemigroup of $GL(V)$ and let $G$ be its Zariski closure. Suppose that the orbit $Sx$ of $x\in V$ is somewhere dense in $V$. Then $G\to
Gx$, $g\mapsto gx$, is a diffeomorphism of $G$ onto an open subset of $V$. In particular, $S$ is somewhere dense in $G$.
\end{proposition}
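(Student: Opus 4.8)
The plan is to convert the (Euclidean) somewhere-density of $Sx$ into a purely Zariski-topological statement and then read off everything from the structure theory of orbits of algebraic group actions. Two preliminaries come first. The Zariski closure $G$ of the commutative semigroup $S$ is a \emph{commutative algebraic group}: commutativity survives passage to the closure because $(g,h)\mapsto ghg^{-1}h^{-1}$ is a morphism vanishing on $S\times S$, and the Zariski closure of a subsemigroup of an algebraic group is a subgroup (for $s\in S$ one has $sG=\overline{sS}\subseteq\overline S=G$, hence $sG=G$ and $s^{-1}G=G$ by the usual dimension/component argument; this is classical). If one prefers to stay strictly inside the Borel--Tits framework one may first complexify, working with the Zariski closure $\mathbf G$ of $S$ in $GL(V_{\mathbb C})$, which is defined over $\mathbb R$, and recover $G$ as its group of real points at the end. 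Secondly, since the Euclidean closure $\overline{Sx}$ has non-empty interior in the irreducible variety $V$, its Zariski closure is all of $V$; that is, $Sx$, and a fortiori $Gx$, is Zariski dense in $V$.

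Next I would invoke the basic theorem on actions of algebraic groups \cite[$\S$ 3.18]{BoTi}: the orbit $Gx$ is a smooth, locally closed subvariety of $V$ which is open in its Zariski closure, and, in characteristic zero, the orbit map $G\to Gx$, $g\mapsto gx$, is separable and factors as an isomorphism of varieties $G/G_x\xrightarrow{\ \sim\ }Gx$. Since $Gx$ is Zariski dense in $V$ by the previous step, its Zariski closure is $V$, so $Gx$ is Zariski-open, hence Euclidean-open, in $V$. By Observation \ref{observ}, applied to the abelian group $G\subseteq GL(V)$ and the point $x$ (which has a somewhere dense orbit already under $S\subseteq G$), the isotropy group $G_x$ is trivial. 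Hence the orbit map $g\mapsto gx$ is itself an isomorphism of varieties $G\xrightarrow{\ \sim\ }Gx$, and an isomorphism of smooth varieties induces a diffeomorphism on the manifolds of (real) points. This proves that $G\to Gx$ is a diffeomorphism onto the open subset $Gx$ of $V$.

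It remains to see that $S$ is somewhere dense in $G$. The set $Gx$ is open and Zariski dense in $V$, so its complement is a proper Zariski-closed set and $Gx$ is in fact Euclidean dense in $V$; therefore the non-empty open subset of $V$ contained in $\overline{Sx}$ meets $Gx$, which shows that the closure of $Sx$ inside $Gx$ has non-empty interior, i.e.\ $Sx$ is somewhere dense in $Gx$. Transporting this through the homeomorphism $G\xrightarrow{\ \sim\ }Gx$, which carries $S$ onto $Sx$, we conclude that $S$ is somewhere dense in $G$.

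The substantive input — and the step I would delegate to \cite{BoTi} — is the orbit structure theorem: that $Gx$ is locally closed in $V$ and that in characteristic zero the orbit map is separable, so that $G/G_x\cong Gx$ as varieties. Everything else is bookkeeping between the two topologies (somewhere dense $\Rightarrow$ Zariski dense; a Zariski-open dense set is Euclidean dense; a variety isomorphism is a diffeomorphism on real points), together with the standard fact that the Zariski closure of a semigroup in an algebraic group is a group, which is exactly what makes Observation \ref{observ} applicable here.
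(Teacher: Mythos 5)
Your proposal is correct in substance and rests on the same two pillars as the paper's argument: the orbit theorem of \cite[$\S$ 3.18]{BoTi} and the triviality of the isotropy group from Observation \ref{observ}. The route differs in the bookkeeping. The paper invokes \cite{BoTi} in its $k$-points form: the orbit $G(x)$ of the group of \emph{real} points is open in its closure in the Hausdorff topology and the orbit map is a submersion; openness of $Gx$ in $V$ then comes directly from the somewhere-density of $Sx$ (the closure of $Gx$ has interior, so $Gx$ does, so it is open by $G$-invariance), and the diffeomorphism is obtained as ``bijective submersion with injective differential''. You instead go Zariski-first: somewhere dense $\Rightarrow$ Zariski dense, then the orbit is open because it is locally closed and dense, and the orbit map is an isomorphism of varieties, hence a diffeomorphism on real points. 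Your version buys a cleaner conceptual statement (an $\mathbb{R}$-isomorphism $G\simeq Gx$) at the cost of having to descend from the algebraically closed setting back to real points, which is exactly where care is needed.

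Concretely, two steps as written are looser than they should be. First, the assertion ``$Gx$ is Zariski-open, hence Euclidean-open, in $V$'' quietly identifies the orbit of the real group with the real locus of the complexified orbit $\mathbf{G}x$; in general $\mathbf{G}(\mathbb{R})x$ can be a proper (merely Euclidean-open) subset of $(\mathbf{G}x)(\mathbb{R})$, and a real orbit need not be Zariski open. The identification \emph{is} valid here, but only because the stabilizer is trivial, so the orbit map $\mathbf{G}\to\mathbf{G}x$ is an isomorphism defined over $\mathbb{R}$ and every real point of $\mathbf{G}x$ is the image of a real point of $\mathbf{G}$ -- you should say this, since it is the whole content of ``recover $G$ as its group of real points at the end''. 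Second, for the variety-level isomorphism you need the stabilizer of $x$ in the \emph{complex} group $\mathbf{G}$ to be trivial, whereas Observation \ref{observ} as stated concerns the real group; the same one-line argument works ($Sx$ spans $V$ over $\mathbb{R}$, hence $V_{\mathbb{C}}$ over $\mathbb{C}$, and an element of $\mathbf{G}_x$ fixes all of $\mathbf{G}x$ by commutativity), but this should be said explicitly. Finally, your appeal to Euclidean density of $Gx$ in all of $V$ in the last paragraph is more than is needed (and is the one statement that genuinely depends on the real-points identification): since $Sx\subset Gx$, the open set $O\subset\overline{Sx}$ lies in $\overline{Gx}$, and $Gx$ is dense in its own closure, so $O$ meets $Gx$; that already gives somewhere-density of $Sx$ in $Gx$, hence of $S$ in $G$ via the diffeomorphism, which is how the paper concludes.
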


In a later paper \cite{AbMa1} we will see that $Gx$ is actually dense in $V$.

\bigskip

\noindent \textit{Proof of Theorem \ref{sdenseLie1}.} A basic theorem about algebraic actions of algebraic groups says that the orbit $G(x)$ is open in its closure
$\overline{G(x)}$ and that the natural map $G\to G(x)$, $g\mapsto gx$, is a submersion; see \cite[$\S$ 3.18]{BoTi}. Thus in our case it follows that $G(x)$ contains a
non-empty open subset of $V$. Then $G(x)$ is open in $V$, by $G$-invariance. Furthermore, the natural map $G\to G(x)$ is a diffeomorphism, since it is bijective, by
observation \ref{observ}, a submersion by the theorem cited above and its tangent map at the identity element has as kernel the tangent space to $G_x$ which is
trivial. It follows that the subsemigroup $S$ of $G$ is somewhere dense in $G$. \qed

\section{The number of topological generators of a connected abelian Lie group}
Let $G$ be a topological group. Let us define $d_{gr}G$ and $d_mG$ to be the minimal number of elements of a subset $A$ of $G$ such that the group, resp. semigroup,
generated by $A$ is dense in $G$. The subscript $m$ was chosen because a semigroup is sometimes called a monoid.

Let $G$ be a connected abelian Lie group. Then $G$ contains a unique maximal compact subgroup $T$. The subgroup $T$ is actually a torus, i.e. a compact connected
abelian Lie group. Let $d$ be the dimension of $G/T$. The number $d$ is called, occasionally, the \textit{non-compact dimension} of $G$.

\begin{theorem}  \label{conablie}
\[
d_{gr}G=d_mG=d+1
\]
unless $G$ is the trivial group.
\end{theorem}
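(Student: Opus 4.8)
The plan is to base everything on the structure theorem for connected abelian Lie groups: $G$ is isomorphic to $\mathbb{R}^d\times\mathbb{T}^k$, where $\mathbb{T}^k=\mathbb{R}^k/\mathbb{Z}^k$, the maximal compact subgroup $T$ corresponds to $\{0\}\times\mathbb{T}^k$, and $d=\dim(G/T)$ is the non-compact dimension; since $G$ is non-trivial, $d+k\geq1$. Because a subset that generates a dense subsemigroup also generates a dense subgroup, we always have $d_{gr}G\leq d_mG$, so it suffices to establish the lower bound $d_{gr}G\geq d+1$ and the upper bound $d_mG\leq d+1$.

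For the lower bound I would first observe that a subgroup $\Gamma$ of $\mathbb{R}^d$ generated by $t\leq d$ elements is never dense: being finitely generated and torsion-free, $\Gamma$ is free abelian of some rank $r\leq t\leq d$, and a $\mathbb{Z}$-basis of $\Gamma$, being $\mathbb{Q}$-linearly independent, either spans a proper $\mathbb{R}$-subspace of $\mathbb{R}^d$ (so $\Gamma$ is not dense) or, when $r=d$, is an $\mathbb{R}$-basis, so that $\Gamma$ is a full lattice and hence discrete. Thus $d_{gr}\mathbb{R}^d\geq d+1$. Since the quotient homomorphism $p\colon G\to G/T\cong\mathbb{R}^d$ is continuous and surjective, it carries a dense subgroup of $G$ onto a dense subgroup of $\mathbb{R}^d$ with no more generators, so $d_{gr}G\geq d_{gr}\mathbb{R}^d\geq d+1$ when $d\geq1$; when $d=0$ the group $G=T$ is still non-trivial, so $d_{gr}G\geq1=d+1$.

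The upper bound $d_mG\leq d+1$ is the heart of the matter, and I expect it to be the main obstacle: a subsemigroup, unlike a subgroup, cannot ``back up'', so the $\mathbb{R}^d$-components of the generators are forced to positively span $\mathbb{R}^d$, which is what requires $d+1$ generators rather than $d$. I would exhibit an explicit generating set. Choose positive reals $v_1,\dots,v_d$ and a vector $\tilde\theta_0=(\tilde\theta_{0,1},\dots,\tilde\theta_{0,k})\in\mathbb{R}^k$ such that $1,v_1,\dots,v_d,\tilde\theta_{0,1},\dots,\tilde\theta_{0,k}$ are linearly independent over $\mathbb{Q}$ (a generic choice works, the bad set being a countable union of affine hyperplanes). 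Put $g_j=(e_j,0)\in\mathbb{R}^d\times\mathbb{T}^k$ for $j=1,\dots,d$, where $e_1,\dots,e_d$ is the standard basis of $\mathbb{R}^d$, and $g_0=(-(v_1,\dots,v_d),\theta_0)$ with $\theta_0$ the class of $\tilde\theta_0$. A semigroup element with exponent vector $(n_0,n_1,\dots,n_d)$, the $n_i$ non-negative and not all zero, has $\mathbb{R}^d$-component $(n_1-n_0v_1,\dots,n_d-n_0v_d)$ and $\mathbb{T}^k$-component $n_0\theta_0$. Given $(\xi,\tau)\in\mathbb{R}^d\times\mathbb{T}^k$ and $\varepsilon>0$: by Kronecker's theorem the cyclic subgroup of $\mathbb{T}^{d+k}$ generated by $(v_1,\dots,v_d,\tilde\theta_0)\bmod\mathbb{Z}^{d+k}$ is dense, and since the forward orbit of a translation of a compact abelian group is dense in the closed subgroup it generates, there are infinitely many integers $n_0\geq1$ such that, modulo $1$, $n_0v_j$ is $\varepsilon$-close to $-\xi_j$ for every $j$, and $n_0\theta_0$ is $\varepsilon$-close to $\tau$ in $\mathbb{T}^k$. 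Taking such an $n_0$ large enough that $\xi_j+n_0v_j>0$ for all $j$, and letting $n_j$ be the nearest integer to $\xi_j+n_0v_j$ (so $n_j\geq0$ and $|n_j-n_0v_j-\xi_j|<\varepsilon$), the corresponding semigroup element lies within $O(\varepsilon)$ of $(\xi,\tau)$. Hence this $(d+1)$-element set generates a dense subsemigroup, so $d_mG\leq d+1$, and combined with the lower bound $d_{gr}G=d_mG=d+1$. (Alternatively, once one merely checks that the subsemigroup above is somewhere dense, its density follows immediately from Theorem~\ref{sdenseLie}.)
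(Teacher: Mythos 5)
Your proposal is correct and follows essentially the paper's own route: the lower bound comes from projecting to $G/T\cong\mathbb{R}^d$ and noting that a subgroup of $\mathbb{R}^d$ with at most $d$ generators cannot be dense, and the upper bound from Kronecker's theorem applied to an explicit set of $d+1$ generators, one of which has strictly negative noncompact coordinates that are, together with $1$, rationally independent of the remaining data, finished by a nearest-integer approximation. The only cosmetic difference is that you work directly in the splitting $\mathbb{R}^d\times\mathbb{T}^k$ and run the Kronecker argument in $\mathbb{T}^{d+k}$ in one step, whereas the paper first establishes the case of $\mathbb{R}^n$ (lemma \ref{realgen}) and then descends from the universal cover modulo $\pi_1G$; the underlying mechanism is identical.
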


The proof is broken up into a series of lemmata.

\begin{lemma}[Kronecker] \label{torus}
For a torus $G\neq \{ 1\}$ we have $d_{gr}G=d_mG=1$. More precisely, if $G=\mathbb{R}^n / \mathbb{Z}^n$ then for a vector $v=(r_1,\ldots,r_n)\in\mathbb{R}^n$ the
monoid generated by $v\,\, \text{mod}\, \mathbb{Z}^n$ is dense in $G$ if and only if the elements $1,r_1,\ldots,r_n$ in $\mathbb{R}$ are linearly independent over
$\mathbb{Q}$.
\end{lemma}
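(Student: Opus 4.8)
The plan is to prove the ``more precise'' statement for $G=\mathbb{R}^n/\mathbb{Z}^n$ first, since every torus of dimension $n$ is isomorphic as a topological group to $\mathbb{T}^n:=\mathbb{R}^n/\mathbb{Z}^n$, and then to read off $d_{gr}G=d_mG=1$ from it. Fix $v=(r_1,\dots,r_n)$, let $M$ be the monoid generated by $v\bmod\mathbb{Z}^n$, let $\pi:\mathbb{R}^n\to\mathbb{T}^n$ be the projection, and for $m=(m_1,\dots,m_n)\in\mathbb{Z}^n$ let $\chi_m:\mathbb{T}^n\to\{z\in\mathbb{C}:|z|=1\}$ be the character $\chi_m(x)=e^{2\pi i\langle m,x\rangle}$; every continuous character of $\mathbb{T}^n$ is of this form, and $\chi_m$ is nontrivial exactly when $m\neq 0$.

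For the ``only if'' direction I would argue by contraposition. Suppose $1,r_1,\dots,r_n$ satisfy a nontrivial relation $a_0+a_1r_1+\dots+a_nr_n=0$ with $a_j\in\mathbb{Z}$ not all zero; then $(a_1,\dots,a_n)\neq 0$ (otherwise $a_0=0$ as well), so with $a=(a_1,\dots,a_n)$ the character $\chi_a$ is nontrivial. Since $\chi_a(kv)=e^{2\pi i k(a_1r_1+\dots+a_nr_n)}=e^{-2\pi i k a_0}=1$ for every $k\geq 0$, the whole monoid $M$ lies in $\ker\chi_a$, which is a proper closed subgroup of $\mathbb{T}^n$ and hence has empty interior. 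So $M$ is not dense.

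For the ``if'' direction, assume $1,r_1,\dots,r_n$ are linearly independent over $\mathbb{Q}$ and set $H=\overline{M}$. The first step is that $H$ is a closed subgroup of $\mathbb{T}^n$: it is a submonoid because the closure of a submonoid of a topological group is a submonoid, and it is closed under inversion by compactness. Indeed, given $h\in H$ and $\varepsilon>0$, compactness of $\mathbb{T}^n$ yields $i<j$ with $jh$ within $\varepsilon$ of $ih$, hence $\ell h$ within $\varepsilon$ of $0$ where $\ell=j-i\geq 1$; as all nonnegative multiples of $h$ lie in the submonoid $H$, this shows first that $0\in\overline{H}=H$, and then that $(\ell-1)h\in H$ is within $\varepsilon$ of $-h$, so $-h\in H$. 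The second step is to show $H=\mathbb{T}^n$. If $H$ were a proper closed subgroup, then by Pontryagin duality it would be contained in $\ker\chi_m$ for some $m\in\mathbb{Z}^n\setminus\{0\}$; but $v\in H$ would force $e^{2\pi i\langle m,v\rangle}=1$, i.e. $m_1r_1+\dots+m_nr_n\in\mathbb{Z}$, a nontrivial $\mathbb{Q}$-linear relation among $1,r_1,\dots,r_n$, contrary to hypothesis. Hence $M$ is dense.

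Finally, for any nontrivial torus $G\cong\mathbb{T}^n$: one always has $d_{gr}G\le d_mG$ (a dense monoid generates a dense group), and $d_{gr}G\ge 1$ since the trivial subgroup is not dense in $G$; on the other hand $\mathbb{R}$ is infinite-dimensional over $\mathbb{Q}$, so one may pick $r_1,\dots,r_n$ with $1,r_1,\dots,r_n$ linearly independent over $\mathbb{Q}$, and then the ``if'' direction produces a single element generating a dense monoid, giving $d_mG\le 1$. I expect the main obstacle to be the ``if'' direction, specifically the passage from ``$H$ is a closed subgroup'' to ``$H=\mathbb{T}^n$'': this rests on the fact that the closure of a submonoid of a \emph{compact} group is a subgroup (it genuinely fails in $\mathbb{R}^n$, which is why the reduction in Theorem \ref{sdenseLie} needs the separate cone argument) together with the description of proper closed subgroups of $\mathbb{T}^n$ as lying in kernels of nontrivial characters. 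The latter could instead be extracted from the structure theorem for closed subgroups of $\mathbb{R}^n$ applied to $\pi^{-1}(H)$, or the whole direction could be replaced by Weyl's equidistribution criterion, but the character-theoretic route above is the cleanest.
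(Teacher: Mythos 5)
Your proof is correct and follows essentially the same route as the paper: the closure of the monoid is a subgroup because a compact subsemigroup of a topological group is a group, and density is then characterized via Pontryagin duality, with an integer character annihilating $v$ corresponding exactly to a nontrivial $\mathbb{Q}$-linear relation among $1,r_1,\ldots,r_n$. Your extra details (the pigeonhole argument for the compactness fact and the explicit contrapositive) just spell out steps the paper leaves implicit.
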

\begin{proof}
Let $H$ be the closure of the subsemigroup of $G$ generated by $v\,\, \text{mod}\, \mathbb{Z}^n$. Then $H$ is a subgroup of $G$ since every compact subsemigroup of a
topological group is actually a group. Then by Pontryagin duality $H$ is the intersection of the kernels of all continuous homomorphisms
$\overline{\varphi}:\mathbb{R}^n / \mathbb{Z}^n\to \mathbb{R} / \mathbb{Z}$ which vanish on $H$. Now $\overline{\varphi}$ is induced by a linear map
$\varphi:\mathbb{R}^n\to\mathbb{R}$ with the property that $\varphi(\mathbb{Z}^n)\subset\mathbb{Z}$, so $\varphi(x_1,\ldots,x_n)=\alpha_1x_1+\ldots +\alpha_nx_n$ with
$\alpha_i\in\mathbb{Z}$ for $(x_1,\ldots,x_n)\in\mathbb{R}^n$. But $\varphi=0$ is the only such map which maps $v=(r_1,\ldots,r_n)$ to an element of $\mathbb{Z}$ if
and only if $1,r_1,\ldots,r_n$ are linearly independent over $\mathbb{Q}$.
\end{proof}

\begin{lemma} \label{realgen}
$d_{gr}(\mathbb{R}^n)=d_m(\mathbb{R}^n)=n+1$ for $n>0$.
\end{lemma}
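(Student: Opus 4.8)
The plan is to prove the two inequalities $d_{gr}(\mathbb{R}^n)\ge n+1$ and $d_m(\mathbb{R}^n)\le n+1$. Since a sub-semigroup generated by a set $A$ is contained in the subgroup generated by $A$, a dense sub-semigroup is a fortiori a dense subgroup, so $d_{gr}(\mathbb{R}^n)\le d_m(\mathbb{R}^n)$; the two inequalities then give $d_{gr}(\mathbb{R}^n)=d_m(\mathbb{R}^n)=n+1$.

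For the lower bound I would argue by contradiction. Suppose $v_1,\dots,v_m$ generate a dense subgroup $\Gamma$ of $\mathbb{R}^n$ with $m\le n$. Let $U$ be the linear span of $v_1,\dots,v_m$, so $\dim U\le m\le n$ and $\Gamma\subseteq U$. If $\dim U<n$, then $U$ is a proper closed linear subspace containing $\Gamma$, contradicting density. If $\dim U=n$, then $m=n$ and $v_1,\dots,v_n$ is a basis of $\mathbb{R}^n$; hence $\Gamma=\mathbb{Z}v_1+\dots+\mathbb{Z}v_n$ is the image of $\mathbb{Z}^n$ under a linear automorphism, so it is discrete, therefore closed, and being countable it is a proper subset of $\mathbb{R}^n$ — again contradicting density. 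Thus $m\ge n+1$, i.e. $d_{gr}(\mathbb{R}^n)\ge n+1$, hence also $d_m(\mathbb{R}^n)\ge n+1$. (Alternatively one could invoke Pontryagin duality exactly as in the proof of Lemma~\ref{torus}: the character $x\mapsto e^{2\pi i\langle\xi,x\rangle}$ vanishes on $\Gamma$ iff $\langle\xi,v_j\rangle\in\mathbb{Z}$ for $j=1,\dots,m$, and for $m\le n$ these linear conditions always admit a nonzero solution $\xi$, so a nontrivial character kills $\Gamma$.)

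For the upper bound I would exhibit $n+1$ explicit generators. Choose positive reals $\theta_1,\dots,\theta_n$ such that $1,\theta_1,\dots,\theta_n$ are linearly independent over $\mathbb{Q}$ (such numbers clearly exist; e.g. $1,\log 2,\log 3,\dots,\log p_n$ with $p_i$ the $i$th prime), and take as generators the standard basis vectors $e_1,\dots,e_n$ together with $w:=-(\theta_1,\dots,\theta_n)$. Then the sub-semigroup $S$ they generate consists of the points $(m_1-k\theta_1,\dots,m_n-k\theta_n)$ with $m_1,\dots,m_n,k\in\mathbb{N}$. To see that $S$ is dense, fix $p=(p_1,\dots,p_n)\in\mathbb{R}^n$ and $\varepsilon\in(0,1)$. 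By Kronecker's lemma~\ref{torus} the sub-semigroup $\{-k(\theta_1,\dots,\theta_n)\bmod\mathbb{Z}^n:k\in\mathbb{N}\}$ of $\mathbb{R}^n/\mathbb{Z}^n$ is dense, and for every $N$ its tail $\{-k(\theta_1,\dots,\theta_n)\bmod\mathbb{Z}^n:k\ge N\}$, being a translate of it, is dense as well. Pick $N$ with $N\theta_i>|p_i|+1$ for all $i$, then pick $k\ge N$ such that $-k(\theta_1,\dots,\theta_n)$ lies within $\varepsilon$ of $p$ modulo $\mathbb{Z}^n$, i.e. there are integers $m_1,\dots,m_n$ with $|m_i-k\theta_i-p_i|<\varepsilon$. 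Since $k\theta_i\ge N\theta_i>|p_i|+1$ and $\varepsilon<1$, each $m_i=k\theta_i+p_i+(m_i-k\theta_i-p_i)$ is a strictly positive integer. Hence $(m_1-k\theta_1,\dots,m_n-k\theta_n)\in S$ and it is within $\varepsilon$ of $p$, so $\overline{S}=\mathbb{R}^n$ and $d_m(\mathbb{R}^n)\le n+1$.

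The lower bound is routine; the content is in the upper bound, and the step I expect to need the most care in the writing is the mechanism that makes $S$ dense. One cannot reduce to the group case: by the lower bound no $n+1$ vectors generate a dense \emph{group}, so the sub-semigroup structure is genuinely used. What works is that the single ``inward-pointing'' generator $w$, with all $\theta_i>0$ — equivalently, with $e_1,\dots,e_n,w$ positively spanning $\mathbb{R}^n$ — forces the nonnegative coefficients $m_i$ to range over all sufficiently large integers, which neutralizes the nonnegativity constraint; and one must use Kronecker's theorem in the cofinal form (arbitrarily large $k$ admissible), which is why the density of the tails, rather than merely of the full orbit, enters the argument.
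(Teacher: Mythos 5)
Your proposal is correct and follows essentially the same route as the paper: the same lower bound (at most $n$ vectors generate a subgroup contained in a proper subspace or a lattice) and the same construction for the upper bound, namely a basis of $\mathbb{R}^n$ together with one ``inward-pointing'' vector whose coefficients, together with $1$, are linearly independent over $\mathbb{Q}$, with density coming from Kronecker's theorem (Lemma \ref{torus}). The only cosmetic difference is in arranging that the Kronecker multiplier be large: you use density of the tails of the orbit to approximate an arbitrary point directly, while the paper applies the lemma to a multiple of $v_{n+1}$ to first capture the positive orthant and then translates by multiples of $v_{n+1}$.
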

\begin{proof}
A subgroup of $\mathbb{R}^n$ generated by at most $n$ vectors is clearly not dense in $\mathbb{R}^n$. On the other hand, we will show that the subsemigroup of
$\mathbb{R}^n$ generated by a set $A$ of the following form is dense in $\mathbb{R}^n$. Let $A=\{ v_1,\ldots,v_{n+1}\}$, where $v_1,\ldots,v_n$ is a basis of the
vector space $\mathbb{R}^n$ over $\mathbb{R}$ and $v_{n+1}=r_1v_1+\ldots +r_nv_n$ and the elements $1,r_1,\ldots,r_n$ in $\mathbb{R}$ are linearly independent over
$\mathbb{Q}$ and all $r_i$ are negative. To prove this we may assume that  $v_1,\ldots,v_n$ is the standard basis of $\mathbb{R}^n$. By lemma \ref{torus} the
subsemigroup of $\mathbb{R}^n / \mathbb{Z}^n$ generated by $v_{n+1}\,\, \text{mod}\, \mathbb{Z}^n$ is dense in $\mathbb{R}^n / \mathbb{Z}^n$. So for each $n$-tuple
$x=(x_1,\ldots,x_n)\in\mathbb{R}^n$ and every $\varepsilon >0$ there is an integer $m\in\mathbb{N}$ and an element $\gamma\in\mathbb{Z}^n$ such that $\|
mv_{n+1}+\gamma -x\| <\varepsilon$ for a fixed norm $\| \cdot \|$ on $\mathbb{R}^n$. We may assume that $m$ is large by applying the preceding lemma to a multiple of
$v_{n+1}$. Then, if $x_i\geq 0$ for $i=1,\ldots,n$, it follows that the coordinates of $\gamma$ are also non-negative, if $\varepsilon$ is sufficiently small. So
$\gamma$ is an element of the subsemigroup $\mathbb{N}^n$ of $\mathbb{R}^n$ generated by $v_1,\ldots,v_n$. Thus every $x=(x_1,\ldots,x_n)\in\mathbb{R}^n$ with
$x_i\geq 0$ is contained in the closure $H$ of the subsemigroup of $\mathbb{R}^n$ generated by $A$. But if $x\in H$ then $x+mv_{n+1}\in H$ for every $m\in\mathbb{N}$.
Since all the coordinates of $v_{n+1}$ are negative, by hypothesis, every vector $y\in\mathbb{R}^n$ is of the form $y=mv_{n+1}+x$, for some $m\in\mathbb{N}$ and all
coordinates of $x$ non-negative. Hence $H=\mathbb{R}^n$.
\end{proof}

\noindent \textit{Proof of Theorem \ref{conablie}.}  Clearly $d_mG\geq d_{gr}G >d$, since $G/T$ is homeomorphic to $\mathbb{R}^d$. The universal covering group
$\widetilde{G}$ of $G$ is isomorphic to $\mathbb{R}^n$ for some $n$ and $\pi_1G$ is isomorphic to a lattice in a subspace of dimension $m=n-d$. Take a basis
$v_1,\ldots,v_n$ of $\mathbb{R}^n$ containing a basis of $\pi_1G$ and let  $A=\{ v_1,\ldots,v_n,v_{n+1}\}$ with $v_{n+1}=r_1v_1+\ldots +r_nv_n$, with $r_i<0$ and
$1,r_1,\ldots,r_n$ linearly independent over $\mathbb{Q}$. Then the subsemigroup of $\mathbb{R}^n$ generated by $A$ is dense in $\mathbb{R}^n$, hence the subsemigroup
of $G=\widetilde{G} / \pi_1G$ generated by the $n+1-m=d+1$ non-zero images of $A$ is dense in $G$ and this finishes the proof. \qed

\medskip

Here is a coordinate free formulation for the conditions on a set $A$ to generate a dense subsemigroup of $\mathbb{R}^n$ which follows from the proof above by
choosing appropriate coordinates.

\begin{corollary} \label{dsubV}
Let $V$ be a finite dimensional vector space over $\mathbb{R}$. A finite subset $A$ of $V$ generates a dense subsemigroup of $V$ if and only if the following two
conditions hold:
\begin{itemize}
\item[(a)] The convex hull of $A$ contains $O$ in its interior.

\item[(b)] The zero form is the only $\mathbb{R}$-linear form $l$ on $V$ with the property that $l(A)\subset\mathbb{Z}$.
\end{itemize}
\end{corollary}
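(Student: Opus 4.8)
The plan is to reduce everything to Lemma \ref{realgen} and its proof by choosing the right basis. First I would establish necessity. Suppose $A$ generates a dense subsemigroup of $V$. Condition (b) is forced by Lemma \ref{torus} (or rather by the Pontryagin-duality argument in its proof): if $l\neq 0$ is an $\mathbb{R}$-linear form with $l(A)\subset\mathbb{Z}$, then the closed subgroup $\ker(\exp\circ 2\pi i\, l)\subset V/\ker l \cong \mathbb{R}$ — more concretely, the preimage under $l$ of $\mathbb{Z}$ — is a proper closed subsemigroup of $V$ containing $A$, contradicting density. For condition (a): the closure of the subsemigroup generated by $A$ is all of $V$, so in particular it is a subgroup and hence contains $-A$; the convex hull argument from Lemma \ref{sdenseconern} shows the closed cone $\overline{C(A)}$ is a convex cone, and if it is all of $V$ while $A$ is finite then $O$ must lie in the interior of the convex hull of $A$ (otherwise a separating linear form $l$ with $l(A)\geq 0$ would make $l(S)$ a discrete nonnegative subsemigroup of $\mathbb{R}$, again contradicting density by Lemma \ref{sdensereals}).

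For sufficiency, assume (a) and (b). The idea is to massage $A$ into the shape used in the proof of Lemma \ref{realgen}. By condition (a), $O$ lies in the interior of the convex hull of $A = \{w_1,\dots,w_k\}$, so by Carathéodory we can pick $n+1$ points among them, say $w_1,\dots,w_{n+1}$, whose convex hull still contains $O$ in its interior, and with $w_1,\dots,w_n$ a basis of $V$. Then $w_{n+1} = s_1 w_1 + \dots + s_n w_n$ with all $s_i < 0$ (after relabeling, using that $O$ is interior to the simplex). Changing coordinates so that $w_1,\dots,w_n$ is the standard basis of $\mathbb{R}^n$, condition (b) says precisely that no nonzero integer vector $(\alpha_1,\dots,\alpha_n)$ satisfies $\sum\alpha_i s_i \in \mathbb{Z}$, i.e. that $1,s_1,\dots,s_n$ are linearly independent over $\mathbb{Q}$. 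This is exactly the hypothesis of Lemma \ref{realgen}, so $\{w_1,\dots,w_{n+1}\}$ — and a fortiori all of $A$ — generates a dense subsemigroup of $V$.

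The main obstacle is the step claiming we can extract from $A$ a subset of the form $\{w_1,\dots,w_n,w_{n+1}\}$ with $w_1,\dots,w_n$ a basis, $w_{n+1} = \sum s_i w_i$ with all $s_i<0$, and simultaneously $1,s_1,\dots,s_n$ linearly independent over $\mathbb{Q}$ — because the last condition is not automatic once we are forced to choose the $w_i$ from within $A$. The resolution is that we do not need the extracted $w_{n+1}$ to have rationally independent coefficients directly from (a); instead, (b) applied to the form dual to the chosen basis gives precisely that rational independence, so the two conditions interlock. One must check that a Carathéodory-type selection can be made with $w_1,\dots,w_n$ linearly independent (possible since the convex hull of $A$ is $n$-dimensional, as $O$ is an interior point) and that the coefficients $s_i$ can be arranged to be strictly negative (possible since $O$ is in the open simplex on the chosen vertices, so it is a strictly positive combination $O = \sum_{i=1}^{n+1}\lambda_i w_i$ with all $\lambda_i>0$, whence $w_{n+1} = \sum_{i=1}^n(-\lambda_i/\lambda_{n+1})w_i$). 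With those verifications in place the corollary follows.
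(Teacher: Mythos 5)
Your necessity argument is fine: a nonzero form $l$ with $l(A)\subset\mathbb{Z}$ traps the semigroup in the proper closed set $l^{-1}(\mathbb{Z})$, and if $O$ were not interior to the convex hull of $A$ a separating form would trap the semigroup in a closed half-space; both contradict density.

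The sufficiency direction, however, has a genuine gap at exactly the point you flag as the ``main obstacle'', and your proposed resolution does not work. Condition (b) is a hypothesis about \emph{all} of $A$: it only excludes nonzero forms that are integer-valued on every element of $A$. After you extract $w_1,\ldots,w_{n+1}$ by Carath\'eodory, the form dual to the basis $w_1,\ldots,w_n$ with integer values $\alpha_1,\ldots,\alpha_n$ is constrained by (b) only if it is also integer-valued on the remaining elements of $A$, which there is no reason to expect; so (b) does \emph{not} give the $\mathbb{Q}$-independence of $1,s_1,\ldots,s_n$ needed for Lemma \ref{realgen}. A concrete counterexample to your interlocking claim: take $V=\mathbb{R}$ and $A=\{1,-1,\sqrt{2}\}$. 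Conditions (a) and (b) hold (any $l(x)=cx$ with $c,\,c\sqrt{2}\in\mathbb{Z}$ forces $c=0$), and $A$ does generate a dense subsemigroup; but the selection $w_1=1$, $w_2=-1$ satisfies every requirement you impose ($w_1$ a basis, $O$ interior to the segment, $s_1=-1<0$), yet $1,s_1$ are $\mathbb{Q}$-dependent and $\{1,-1\}$ generates the non-dense semigroup $\mathbb{Z}\setminus\{0\}$. So your argument proves density only when $|A|=n+1$ (where it essentially restates Remark \ref{subGrem}); for larger $A$ you would have to show that a \emph{good} $(n{+}1)$-element subset can always be chosen, which you neither prove nor is obvious, or else argue with all of $A$ simultaneously --- for instance by first showing via (b) and the Pontryagin-duality argument of Lemma \ref{torus} that the \emph{group} generated by $A$ is dense, and then using (a) to show the closure of the semigroup is already a group. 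As it stands, the ``if'' half of the corollary is not established.
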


\begin{remark}
In the previous corollary, condition (a) is equivalent to
\begin{itemize}
\item[(a$'$)] The zero form is the only $\mathbb{R}$-linear form $l$ on $V$ with the property that $l(A)\subset [0,+\infty)$.
\end{itemize}
\end{remark}

A more computational version of corollary \ref{dsubV} for the case that the cardinality of $A$ is $1+\dim V$ is the following:

\begin{remark} \label{subGrem}
A set $\{ v_1,\ldots,v_{n+1}\}$ of $n+1$ elements of an $n$-dimensional real vector space $V$ generates a dense subsemigroup of $V$ if and only if the following two
conditions hold:
\begin{itemize}
\item[(a)] The vectors $v_1,\ldots,v_n$ are linearly independent over $\mathbb{R}$.

\item[(b)] For the vector $v_{n+1}=\alpha_1v_1+\ldots +\alpha_nv_n$ we have $\alpha_i <0$ for all $i=1,\ldots,n$ and $1,\alpha_1,\ldots,\alpha_n$ are linearly independent
over $\mathbb{Q}$.
\end{itemize}
\end{remark}

We will also need the more general case of an arbitrary connected abelian Lie group.

Let $G$ be a connected abelian Lie group and let $\mathfrak{g}$ be its Lie algebra, a real vector space of dimension $n$, say. The exponential map
$\exp:\mathfrak{g}\to G$ is the universal covering homomorphism of $G$. Let $T$ be the maximal torus of $G$ and let $\mathfrak{t}=\exp^{-1}(T)$ be its Lie algebra.
Then $\Gamma:=\ker \exp$ is a lattice in $\mathfrak{t}$. Let $v_1,\ldots,v_t$ be a basis of this lattice.

\begin{corollary} \label{subGexp}
Let $\{ v_{t+1},\ldots,v_{n+1}\}$ be a set of elements of $\mathfrak{g}$. Then the subsemigroup of $G$ generated by $\exp(v_{t+1}),\ldots,\exp(v_{n+1})$ is dense in
$G$ if and only if the following two conditions hold:
\begin{itemize}
\item[(a)] The vectors $v_1,\ldots,v_n$ form a basis of $\mathfrak{g}$ over $\mathbb{R}$.

\item[(b)] The vector $v_{n+1}=\alpha_1v_1+\ldots +\alpha_nv_n$ has the following properties: $\alpha_i <0$ for $i=t+1,\ldots,n$ and $1,\alpha_1,\ldots,\alpha_n$ are
linearly independent over $\mathbb{Q}$.
\end{itemize}
\end{corollary}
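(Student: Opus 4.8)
The plan is to reduce Corollary \ref{subGexp} to Remark \ref{subGrem} by passing to the universal cover. Write $\widetilde{G}=\mathfrak{g}\cong\mathbb{R}^n$ with $\exp:\mathfrak{g}\to G$ the universal covering homomorphism and $\Gamma=\ker\exp$ the lattice in $\mathfrak{t}$ with basis $v_1,\ldots,v_t$. The subsemigroup of $G$ generated by $\exp(v_{t+1}),\ldots,\exp(v_{n+1})$ is dense in $G$ if and only if its full preimage under $\exp$ is dense in $\mathfrak{g}$, and that preimage is precisely the subsemigroup of $\mathfrak{g}$ generated by $\Gamma\cup\{v_{t+1},\ldots,v_{n+1}\}$ together with $-v_1,\ldots,-v_t$ — equivalently, since $\Gamma$ is a group, by the set $\{\pm v_1,\ldots,\pm v_t,v_{t+1},\ldots,v_{n+1}\}$. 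So I would first prove the clean statement: for a lattice $\Gamma=\mathbb{Z}v_1+\cdots+\mathbb{Z}v_t$ and vectors $v_{t+1},\ldots,v_{n+1}\in\mathbb{R}^n$, the subsemigroup generated by $\{\pm v_1,\ldots,\pm v_t,v_{t+1},\ldots,v_{n+1}\}$ is dense in $\mathbb{R}^n$ iff conditions (a) and (b) hold.

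For this reduced statement I would apply Corollary \ref{dsubV} to the set $A=\{\pm v_1,\ldots,\pm v_t,v_{t+1},\ldots,v_{n+1}\}$ and translate its two conditions. Condition (a) of \ref{dsubV} — the convex hull of $A$ contains $O$ in its interior — forces $A$ to span $\mathbb{R}^n$, hence $v_1,\ldots,v_n$ span, hence form a basis (this gives (a) of the corollary); and once $v_1,\ldots,v_n$ is a basis, writing $v_{n+1}=\sum\alpha_iv_i$ one checks, using the equivalent form (a$'$) (no nonzero linear form is $\geq 0$ on $A$), that the presence of $\pm v_1,\ldots,\pm v_t$ already kills any form that is $\geq 0$ on all of $A$ unless it vanishes on $v_1,\ldots,v_t$, and then positivity on $v_{t+1},\ldots,v_n$ together with $\geq 0$ on $v_{n+1}$ forces $\alpha_{t+1},\ldots,\alpha_n<0$ (the sign condition in (b)); this is exactly the argument already used in Lemma \ref{realgen} and in the proof of Theorem \ref{conablie}. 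Condition (b) of \ref{dsubV} — the only $\mathbb{R}$-linear form $l$ with $l(A)\subset\mathbb{Z}$ is zero — is unchanged by replacing $v_i$ by $-v_i$, so it reads: $l(v_i)\in\mathbb{Z}$ for $i=1,\ldots,n+1$ implies $l=0$; expanding $l$ in the dual basis to $v_1,\ldots,v_n$ this says $1,\alpha_1,\ldots,\alpha_n$ are linearly independent over $\mathbb{Q}$, which is the rational-independence clause of (b). Conversely, if (a) and (b) hold then both conditions of \ref{dsubV} hold, so the subsemigroup is dense.

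The main obstacle — really the only delicate point — is the first identification: that the $\exp$-preimage of the subsemigroup generated by the $\exp(v_j)$ is the subsemigroup generated by those $v_j$ together with $\pm$ a lattice basis, and that density downstairs is equivalent to density upstairs. The latter is routine for a covering map ($\exp$ is open and surjective, so $\exp$ of a dense set is dense, and $\exp^{-1}$ of a dense set is dense since $\exp$ is open). For the former, an element of the downstairs semigroup is a finite product $\exp(v_{j_1})\cdots\exp(v_{j_k})=\exp(v_{j_1}+\cdots+v_{j_k})$, so a preimage point is $v_{j_1}+\cdots+v_{j_k}+\gamma$ with $\gamma\in\Gamma$; conversely every such point maps in; and $\gamma$ ranges over all of $\Gamma$, which as a group is generated as a semigroup by $\pm v_1,\ldots,\pm v_t$. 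One should note the harmless point that the reduced statement as phrased allows the trivial product (giving $0$, and more generally all of $\Gamma$) so the upstairs semigroup genuinely contains $\Gamma$; this matches the convention under which Corollary \ref{dsubV} was stated. With that identification in hand, everything else is a direct application of Corollary \ref{dsubV} exactly as in the proof of Theorem \ref{conablie}, so no new ideas are needed. \qed
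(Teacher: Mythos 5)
Your overall route is sound and is essentially the paper's own reduction: pass to the universal cover $\exp\colon\mathfrak g\to G$, trade the lattice $\Gamma$ for the symmetrized generating set $A=\{\pm v_1,\dots,\pm v_t,v_{t+1},\dots,v_{n+1}\}$, and invoke the Euclidean criterion of Section 4 (the paper uses Remark \ref{subGrem} for sufficiency and a projection-to-$G/T$ plus character argument for necessity; your use of Corollary \ref{dsubV} packages both directions at once, which is a reasonable repackaging rather than a new idea). The mismatch you flag between $\exp^{-1}(\exp S)=S+\Gamma$ and the semigroup generated by $A$ is indeed harmless, but not for the reason you give ("convention"): the two sets differ only inside $\Gamma$, which is closed and nowhere dense in $\mathfrak g$ (equivalently, downstairs they differ only by the identity element), so density of one is equivalent to density of the other; this should be said explicitly. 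Also, the necessity of the sign condition is not "the argument already used in Lemma \ref{realgen}" — that lemma and Theorem \ref{conablie} only construct dense semigroups; what you actually use is the only-if direction of Corollary \ref{dsubV}, which is fine since the paper states it as an equivalence.

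The genuine gap is in your derivation of condition (a) of the corollary. You write: condition (a) of Corollary \ref{dsubV} "forces $A$ to span $\mathbb R^n$, hence $v_1,\dots,v_n$ span, hence form a basis." The second "hence" is a non sequitur: $A$ also contains $v_{n+1}$, and $n+1$ vectors can span $\mathfrak g$ while the first $n$ of them lie in a hyperplane (for instance $v_{t+1}\in\mathfrak t$ with $v_{n+1}$ supplying the missing direction). The needed implication is true, but it has to be extracted from (a) (or (a$'$)) directly, not from mere spanning: if $v_1,\dots,v_n$ all lay in a hyperplane $W$, pick a nonzero linear form $l$ vanishing on $W$, with the sign chosen so that $l(v_{n+1})\ge 0$; then $l\ge 0$ on every element of $A$ (in particular on $\pm v_i$, $i\le t$), contradicting (a$'$), equivalently $\operatorname{conv}(A)\subset\{l\ge 0\}$ so $0\notin\operatorname{int}\operatorname{conv}(A)$. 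This patch matters, because the remainder of your translation — the dual-basis form $v_{i_0}^*$ showing $\alpha_{i_0}<0$ for $i_0>t$, and the identification of the integrality condition with $\mathbb Q$-linear independence of $1,\alpha_1,\dots,\alpha_n$ — presupposes that $v_1,\dots,v_n$ is a basis. With that step repaired, the proof is correct and parallels the paper's.
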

\begin{proof}
Let $S$ be the subsemigroup of $\mathfrak{g}$ generated by $v_{t+1},\ldots,v_{n+1}$. Then $S+\Gamma$ is dense in $\mathfrak{g}$ if the conditions (a) and (b) are
satisfied, by remark \ref{subGrem}, hence $\exp(S)$ is dense in $G$. Conversely, if $S$ is dense in $G$, then its image in $G/T$ is dense, hence
$v_{t+1},\ldots,v_{n}$ are linearly independent modulo the span of $\Gamma$ and $\alpha_i <0$ for $i>t$. Assume there is a linear relation between
$1,\alpha_1,\ldots,\alpha_n$ over $\mathbb{Q}$. So there are numbers $b_1,\ldots,b_{n+1}$ in $\mathbb{Q}$, not all of them zero, such that $b_1\alpha_1 +\ldots
+b_n\alpha_n=b_{n+1}$. We may assume that $b_1,\ldots,b_{n+1}$ are in $\mathbb{Z}$, by multiplying with an appropriate non-zero integer. Now define a linear form
$l:\mathfrak{g}\to\mathbb{R}$ by putting $l(u_i)=b_i$ for $i=1,\ldots,n$. Then $l(u_{n+1})=b_{n+1}$. So $l(S+\Gamma)\subset\mathbb{Z}$ and hence $l$ induces a
non-zero homomorphism $\overline{l}:\mathfrak{g}/\Gamma \to \mathbb{R}/\mathbb{Z}$ whose kernel contains $S$. So $\exp{S}$ is not dense in $G$, which is isomorphic to
$\mathfrak{g}/\Gamma$ via $\exp$.
\end{proof}

\section{Applications to finitely generated hypercyclic abelian semigroups and groups of matrices}
We first collect the facts we have proven so far in order to apply them to the various cases.

Let $V$ be a real vector space of dimension $n$ and let $S$ be a commutative sub(semi)group of $GL(V)$. Let $G$ be the Zariski closure of $S$ which is a closed
abelian subgroup of $GL(V)$ with finite component group and let $T$ be its maximal compact connected subgroup.

\begin{theorem} \label{somewheregen}
If $S$ has a somewhere dense orbit then $S$ has at least $n+1-\dim T$ generators.
\end{theorem}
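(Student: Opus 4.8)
The plan is to deduce this from three facts already established: Theorem \ref{sdenseLie1}, which pins down the geometry of the Zariski closure $G$; Theorem \ref{conablie}, which computes the minimal number of topological generators of a connected abelian Lie group; and Lemma \ref{genablie}(a), which lets one pass from a group to its identity component.

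First I would unwind Theorem \ref{sdenseLie1}. Applied to our $S$ and to the point $x$ with somewhere dense orbit, it says that $G(x)$ is open in $V$ and that $g\mapsto gx$ is a diffeomorphism $G\to G(x)$; hence $G$ is a Lie group of dimension $n$, so $\dim G^0=n$. It also says that $\overline{S}$ is a subgroup of $G$ containing $G^0$; in particular $\overline{S}$ is an abelian Lie group with finite component group and $(\overline{S})^0=G^0$.

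Next, if $S$ is generated by $t$ elements then, since $S$ is dense in $\overline{S}$ by definition of the closure, $\overline{S}$ has a dense subsemigroup with $t$ generators. I would then apply Lemma \ref{genablie}(a) to the group $\overline{S}$ to conclude that its identity component $G^0$ also has a dense subsemigroup with $t$ generators, whence $t\ge d_mG^0$. Finally, since $T$ is connected it lies in $G^0$ and, being maximal compact connected in $G$, it is also the maximal compact connected subgroup of $G^0$; thus the non-compact dimension of $G^0$ equals $\dim(G^0/T)=n-\dim T$, and Theorem \ref{conablie} gives $d_mG^0=n-\dim T+1$ (the group $G^0$ is non-trivial since $G(x)$ is a non-empty open subset of $V$, so $n\ge 1$). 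Combining, $t\ge n+1-\dim T$.

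I do not expect a genuine obstacle here: the statement is essentially a repackaging of the three cited results. The only points that need a little care are identifying $\dim G^0$ with $n$ via the diffeomorphism $G\cong G(x)$ and openness of $G(x)$ in $V$, and checking that the maximal torus of $G$ is the same as that of $G^0$, so that the ``non-compact dimension'' appearing in Theorem \ref{conablie} works out to be $n-\dim T$.
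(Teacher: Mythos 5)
Your proposal is correct and follows essentially the same route as the paper: Theorem \ref{sdenseLie1} gives $\dim G=n$ and $\overline{S}\supseteq G^0$, Lemma \ref{genablie}(a) passes the generating set down to $G^0$, and Theorem \ref{conablie} yields $d_mG^0=n+1-\dim T$. Your extra care in applying Lemma \ref{genablie}(a) to $\overline{S}$ (rather than all of $G$) and in checking that $T$ is also the maximal compact connected subgroup of $G^0$ just makes explicit what the paper's terse proof leaves implicit.
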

\begin{proof}
The group $G$ has dimension $n$ by theorem \ref{sdenseLie1}, so $d_{gr}G^0=d_mG^0=n+1 -\dim T$ by theorem \ref{conablie} and the corresponding numbers for $G$ are not
smaller by lemma \ref{genablie}.
\end{proof}

To obtain the exact minimal number of generators we will apply lemma \ref{genablie}, which in the terminology of section 4 reads as follows.

\begin{corollary} \label{cor1}
Let $G$ be an abelian Lie group with finite component group. Then
\[
d_{gr}G=d_mG=\max (d_mG^0,d_mG/G^0)=\max (d_{gr}G^0,d_{gr}G/G^0).
\]
\end{corollary}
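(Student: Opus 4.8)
The plan is to derive Corollary \ref{cor1} directly from Lemma \ref{genablie} together with the obvious inequalities relating the generation numbers of $G$, $G^0$ and $G/G^0$. First I would fix notation: write $t_0 = d_m G^0$ and $t_1 = d_m (G/G^0)$, noting that $t_1$ is finite since $G/G^0$ is a finite group, and that by Theorem \ref{conablie} the numbers $d_{gr} G^0$ and $d_m G^0$ coincide (both equal $d+1$, or $0$ if $G^0$ is trivial), and that $d_{gr}(G/G^0) = d_m(G/G^0)$ trivially for a finite group. So the four quantities on the right of the displayed equation reduce to the single value $\max(t_0, t_1)$.

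Next I would establish the two inequalities. For the lower bound $d_{gr} G \geq \max(t_0, t_1)$: any set $A$ generating a dense subgroup of $G$ has its image in $G/G^0$ generating all of $G/G^0$ (the image of a dense subgroup is dense, hence everything, in the finite discrete group), so $|A| \geq t_1$; and by Lemma \ref{genablie}(a) a dense subgroup of $G$ with $|A|$ generators forces a dense subgroup of $G^0$ with $|A|$ generators, so $|A| \geq t_0$. The same argument applied to semigroups gives $d_m G \geq \max(t_0, t_1)$. For the upper bound, Lemma \ref{genablie}(b) says exactly that $G$ can be generated (as group or semigroup) by $\max(t_0, t_1)$ elements, so $d_{gr} G \leq \max(t_0, t_1)$ and $d_m G \leq \max(t_0, t_1)$. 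Combining, all of $d_{gr} G$, $d_m G$ equal $\max(t_0, t_1)$, and since $\max(t_0,t_1) = \max(d_m G^0, d_m(G/G^0)) = \max(d_{gr} G^0, d_{gr}(G/G^0))$ by the remarks above, the chain of equalities follows.

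Honestly, I expect there to be no real obstacle here: the corollary is a pure bookkeeping consequence of Lemma \ref{genablie} (whose two parts are precisely the upper and lower bound ingredients) plus the elementary fact that a dense sub(semi)group surjects onto any finite quotient. The only point requiring a little care is the degenerate case where $G^0$ is trivial (so $G$ itself is finite): then $t_0 = 0$, $\max(t_0, t_1) = t_1$, and one must check the formula still reads correctly — which it does, since $d_m G = d_m(G/G^0) = t_1$ in that case. I would mention this case in one sentence rather than belabor it. A second, even more trivial thing to remember to state is why $d_{gr} = d_m$ on the left at all: this is forced by the sandwich, since the lower bound argument I gave for $d_{gr} G$ already produces $\geq \max(t_0,t_1)$ and $d_m G \geq d_{gr} G$ always while the upper bound gives $d_m G \leq \max(t_0,t_1)$.

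The write-up will therefore be short: one line reducing everything to $\max(t_0, t_1)$ via Theorem \ref{conablie} and finiteness of $G/G^0$, one line for the lower bound via Lemma \ref{genablie}(a) and the quotient observation, one line for the upper bound via Lemma \ref{genablie}(b), and a closing remark on the trivial-$G^0$ case.
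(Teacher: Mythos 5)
Your proposal is correct and follows essentially the same route as the paper, whose proof is just the one-line citation ``Lemma \ref{genablie} and the formula $d_{gr}G^0=d_mG^0$ of theorem \ref{conablie}''; you merely spell out the sandwich argument (lower bound from Lemma \ref{genablie}(a) plus the finite-quotient observation, upper bound from Lemma \ref{genablie}(b)) that the authors leave implicit. The extra care you take with the trivial-$G^0$ case and with $d_{gr}=d_m$ on finite groups is fine and consistent with the paper's conventions.
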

\begin{proof}
Lemma \ref{genablie} and the formula $d_{gr}G^0=d_mG^0$ of theorem \ref{conablie}.
\end{proof}

We will apply this for closed abelian subgroups $G$ of $GL(V)$ with finite component group and for which there is a vector $x\in V$ such that the natural map $G\to
G(x)$, $g \mapsto gx$, is a diffeomorphism onto an open subset of $V$.

\begin{corollary} \label{cor2}
Let $G$ be a closed subgroup of $GL(V)$ with finite component group and let $T$ be a maximal torus of $G$.
\begin{enumerate}
\item[(a)] Let $S$ be a commutative sub(semi)group of $G$ with a somewhere dense orbit. Then $S$ has at least $1 + \dim V -\dim T$ generators.

\item[(b)] Suppose there is a closed connected abelian subgroup $H$ of $G$ which contains $T$ and has an open orbit. Then there is a dense sub(semi)group $S$ of $H$
with $1 + \dim V -\dim T$ (commuting) generators. Every dense subsemigroup $S$ of $G$ has a somewhere dense orbit.
\end{enumerate}
\end{corollary}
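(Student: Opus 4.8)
The plan is to prove Corollary~\ref{cor2} by combining Theorem~\ref{somewheregen}, Corollary~\ref{cor1}, and Corollary~\ref{subGexp} (or the coordinate-free version, Remark~\ref{subGrem}), with the observation about trivial isotropy groups that has already been used in the proof of Theorem~\ref{sdenseLie1}.

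For part~(a), I would argue as follows. Let $S$ be a commutative sub(semi)group of $G$ with a somewhere dense orbit $\overline{S(x)}$. Since $G\subset GL(V)$, Theorem~\ref{somewheregen} applies verbatim with $n=\dim V$, giving that $S$ has at least $1+\dim V-\dim T$ generators. (One should note here that if $G$ were not itself the Zariski closure of $S$, passing to the Zariski closure $\overline{S}^{\,Z}\subset G$ only decreases $\dim T$, so the bound $1+\dim V-\dim T$ for the ambient $T$ is still a lower bound; alternatively, Theorem~\ref{somewheregen} can simply be applied to $\overline{S}^{\,Z}$ and its maximal torus, which is conjugate into a maximal torus of $G$.) This is the easy half.

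For part~(b), the key point is to realize the claimed dense sub(semi)group inside the given subgroup $H$. Since $H$ is a closed connected abelian subgroup of $GL(V)$ with an open orbit $H(x_0)$ for some $x_0$, the map $H\to H(x_0)$ is a submersion onto an open set (by the Borel–Tits theorem cited in the proof of Theorem~\ref{sdenseLie1}), and by the isotropy observation (Observation~\ref{observ}, whose proof only uses that the orbit spans $V$, which holds since it is open) the map $H\to H(x_0)$ is a diffeomorphism. In particular $\dim H=\dim V$. Now $H$ is a connected abelian Lie group containing the torus $T$, so $T$ is its maximal compact subgroup and the non-compact dimension of $H$ is $\dim V-\dim T$. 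By Theorem~\ref{conablie} (or directly Corollary~\ref{subGexp}), $H$ has a dense subsemigroup with $1+\dim V-\dim T$ generators; picking generators of the form $\exp(v_{t+1}),\dots,\exp(v_{n+1})$ as in Corollary~\ref{subGexp} gives commuting generators, and since $H$ is abelian they commute. This dense subsemigroup $S$ of $H$ is a fortiori a dense, hence somewhere dense, subsemigroup of $\overline{H}=H$, so its orbit of $x_0$ is dense in $H(x_0)$, which is a non-empty open subset of $V$; thus $S(x_0)$ is somewhere dense. Finally, for the last sentence, if $S$ is \emph{any} dense subsemigroup of $G$ and $x_0$ is a point with trivial isotropy and open $G$-orbit (the hypothesis says some such $H$, hence some open orbit, exists; taking $x_0$ in that orbit works since isotropy in $G$ is trivial by Observation~\ref{observ}), then $\overline{S(x_0)}\supset \overline{S}(x_0)=G(x_0)$, which contains a non-empty open subset of $V$.

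The step I expect to be the main (minor) obstacle is bookkeeping about which torus the lower bound in~(a) refers to: I must make sure the maximal torus $T$ of the \emph{ambient} group $G$ gives a lower bound on the number of generators of $S$, even though Theorem~\ref{somewheregen} is phrased in terms of the Zariski closure of $S$ itself. The resolution is that the maximal compact connected subgroup of the Zariski closure of $S$ is contained in a conjugate of $T$, so its dimension is at most $\dim T$, and hence $1+\dim V-\dim(\text{its torus})\geq 1+\dim V-\dim T$; this makes the bound in~(a) match the construction in~(b), so the number $1+\dim V-\dim T$ is exactly the minimum. Everything else is a direct citation of the already-proven results.
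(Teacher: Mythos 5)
Your overall route is the intended one: part (a) is Theorem \ref{somewheregen} plus a torus-dimension comparison, and part (b) is Theorem \ref{conablie} applied to $H$ (whose maximal compact subgroup is $T$ and whose dimension equals $\dim V$), together with the remark that a dense subsemigroup's orbit closure contains the open orbit; the paper states the corollary without a written proof, and this is exactly the derivation it has in mind from the preceding results.

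Two of your justifications, however, do not literally apply and need replacing. First, in (a) you assert $\overline{S}^{\,Z}\subset G$; this requires $G$ to be Zariski closed, whereas the hypothesis is only that $G$ is closed, and your fallback phrase that the maximal torus of $\overline{S}^{\,Z}$ is ``conjugate into a maximal torus of $G$'' is left unjustified. The correct repair is already available in the paper: by Theorem \ref{sdenseLie1} the Euclidean closure $\overline{S}$ contains the identity component of the Zariski closure of $S$, hence contains its maximal torus $T_1$; since $G$ is closed, $T_1\subset G$, so $T_1$ is a torus of $G$ and $\dim T_1\le\dim T$, which is all you need for the lower bound. Second, in (b) you invoke the Borel--Tits submersion theorem for $H$, but $H$ is only assumed to be a closed connected abelian subgroup, not algebraic; indeed the groups $H=(\mathbb{R}^*_{>0}\cdot SO(2))^m\times\mathbb{R}^*_{>0}$ used later in the paper are not Zariski closed, so the citation would fail in exactly the cases of interest. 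What you actually need is only $\dim H=\dim V$ (even $\dim H\le\dim V$ suffices), and this follows from Observation \ref{observ} (the open orbit spans $V$, so the isotropy is trivial) plus elementary Lie theory of smooth actions: trivial isotropy makes the orbit map an injective immersion, so $\dim H\le\dim V$, and an immersed image of dimension less than $\dim V$ cannot contain a non-empty open set, so $\dim H=\dim V$ and the orbit map is a diffeomorphism onto the open orbit; no algebraicity is involved. Finally, your parenthetical claim that ``isotropy in $G$ is trivial by Observation \ref{observ}'' is false for non-abelian $G$ (the Observation assumes $G$ abelian), but it is also unnecessary: for the last sentence you only need that some $G$-orbit is open, whence $\overline{S(x_0)}\supset\overline{S}(x_0)=G(x_0)$ is somewhere dense. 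With these repairs, which use only results already proved in the paper, your argument is complete.
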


As an application we compute the numbers in tables \ref{table1} and \ref{table2} of the introduction. \textit{Note that the numbers for groups are the same as for
semigroups.} This follows from theorem \ref{conablie}. It is also turns out that \textit{in all the cases we consider the minimal numbers are the same for a somewhere
dense orbit as for a dense orbit.} This is due to corollary \ref{cor1}.

Define for any subgroup $G$ of $GL(V)$ the number $m(G)$ to be the minimal number of commuting elements of $G$ with the property that the subsemigroup $S$ of $G$ they
generate has a somewhere dense orbit. If $G$ has no finitely generated commutative subsemigroup with a somewhere dense orbit we set $m(G)=\infty$. We have the
following results which we list in the form of a table; see table \ref{table3}. For the definition of $T$ and $H$ in each case see corollary \ref{cor3}.

Details will be spelled out in the following corollaries.

For the subsemigroups generated by $m(G)$ elements we have very precise information in case of $G=GL(n,\mathbb{C})$ and $G=GL(n,\mathbb{R})$, see remark
\ref{subGrem}.

\begin{table}[htdp]
\caption{} \label{table3}

\begin{center}
\scalebox{0.9}{
\begin{tabular}{|c|c|c|c|}
\hline \hline \multicolumn{1}{|c|}{$G$} & \multicolumn{1}{|c|}{$m(G)$} & \multicolumn{1}{c|}{$T$}  & \multicolumn{1}{c|}{$H$}\\
\hline \hline \multicolumn{1}{|c|}{$GL(n,\mathbb{C})$} & \multicolumn{1}{|c|}{\boldmath $n+1$} & \multicolumn{1}{|c|}{$(S^1)^n$} & \multicolumn{1}{|c|}{$(\mathbb{C}^*)^n$} \\
\hline \multicolumn{1}{|c|}{$(\mathbb{C}^*)^n\subset G\subset GL(n,\mathbb{C})$} & \multicolumn{1}{|c|}{\boldmath $n+1$} & \multicolumn{1}{|c|}{$(S^1)^n$} & \multicolumn{1}{|c|}{$(\mathbb{C}^*)^n$} \\
\hline \multicolumn{1}{|c|}{$GL(2m,\mathbb{R})$} & \multicolumn{1}{|c|}{\boldmath $m+1$} & \multicolumn{1}{|c|}{$(SO(2))^m$} & \multicolumn{1}{|c|}{$(\mathbb{R}^*_{>0}\cdot SO(2))^m$} \\
\hline \multicolumn{1}{|c|}{$GL(2m+1,\mathbb{R})$} & \multicolumn{1}{|c|}{\boldmath $m+2$} & \multicolumn{1}{|c|}{$(SO(2))^m$} & \multicolumn{1}{|c|}{$(\mathbb{R}^*_{>0}\cdot SO(2))^m \times \mathbb{R}^*_{>0}$} \\
\hline \multicolumn{1}{|c|}{complex $n\times n$ Toeplitz} & \multicolumn{1}{|c|}{\boldmath $2n$} & \multicolumn{1}{|c|}{$S^1$} & \multicolumn{1}{|c|}{$G$} \\
\hline \multicolumn{1}{|c|}{real $n\times n$ Toeplitz} & \multicolumn{1}{|c|}{\boldmath $n+1$} & \multicolumn{1}{|c|}{$1$} & \multicolumn{1}{|c|}{$G^0$} \\
\hline \multicolumn{1}{|c|}{real triangular} & \multicolumn{1}{|c|}{\boldmath $n+1$} & \multicolumn{1}{|c|}{$1$} & \multicolumn{1}{|c|}{Toeplitz} \\
\hline
\end{tabular}
}
\end{center}
\end{table}

\begin{corollary} \label{cor3}
Let $G$ be $GL(n,\mathbb{C})$ or $GL(n,\mathbb{R})$ and let $H$ be the corresponding subgroup of table \ref{table3}. Let $S$ be a subsemigroup of $G$ generated by
$m(G)$ commuting elements and having a somewhere dense orbit. Then the following claims hold:
\begin{enumerate}
\item[(a)] $S$ is diagonalizable over $\mathbb{C}$.

\item[(b)] $S$ has an everywhere dense orbit unless $G=GL(2m+1,\mathbb{R})$ in which case for some point the orbit closure is either $\mathbb{R}^{2m+1}$ or a half
space of $\mathbb{R}^{2m+1}$.

\item[(c)] The closure of $S$ in $G$ is a conjugate of $H$, unless $G=GL(2m+1,\mathbb{R})$ in which case $\overline{S}$ contains a conjugate $H_1$ of $H$ and $H_1$ is
of index $1$ or $2$ in $\overline{S}$.
\end{enumerate}
We also have the information about the generators formulated in remark \ref{subGrem} and corollary \ref{subGexp}, in case $\overline{S}$ is connected.
\end{corollary}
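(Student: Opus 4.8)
The plan is to handle the two cases $G = GL(n,\mathbb{C})$ and $G = GL(n,\mathbb{R})$ in parallel, reducing everything to the structure of maximal abelian subgroups of $GL$ and the counting results of sections 3 and 4. By Theorem \ref{sdenseLie1}, the Zariski closure $\overline{S}^{Zar}$ is a closed abelian subgroup of $G$ whose orbit map at the somewhere-dense point $x$ is a diffeomorphism onto an open subset of $V$; in particular $\dim \overline{S}^{Zar} = \dim V$, so this group is a maximal abelian subgroup of $G$ of maximal dimension. First I would invoke the classification of such groups: in $GL(n,\mathbb{C})$ a maximal abelian subgroup of dimension $n$ acting with an open orbit is conjugate to the diagonal group $(\mathbb{C}^*)^n$ (any abelian subgroup with an open orbit on $V$ must act with $V$ as a sum of $1$-dimensional eigenspaces, else the commutant forces a smaller orbit); over $\mathbb{R}$, the analogous group is a product of blocks $\mathbb{R}^*$ and $\mathbb{R}^*_{>0}\cdot SO(2)$, and maximality of dimension forces as many $2$-dimensional blocks as possible, i.e. $(\mathbb{R}^*_{>0}\cdot SO(2))^m$ when $n=2m$ and $(\mathbb{R}^*_{>0}\cdot SO(2))^m\times\mathbb{R}^*_{>0}$ when $n=2m+1$. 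This gives (a): all elements of $S$ lie in a group conjugate to one of these, which is diagonalizable over $\mathbb{C}$.

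Next, for (c), I would combine Corollary \ref{cor1} with the generator count. By Corollary \ref{cor2}(a) and Theorem \ref{conablie}, any somewhere-dense semigroup needs at least $1+\dim V-\dim T$ generators, which equals $m(G)$ by the table computation; since $S$ is generated by exactly $m(G)$ elements and $\overline{S}$ is a subgroup containing $G^{0}$ of its Zariski closure (Theorem \ref{sdenseLie1} again), $\overline{S}$ is a subgroup of the maximal abelian group $H^{Zar}$ (a conjugate of $H$ in the complex and even real cases, and in the odd real case of $(\mathbb{R}^*)^{m+1}\cdot SO(2)^m$-type groups containing such an $H$) with the property that its identity component already realizes the minimal generator number. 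In the complex case and the even real case the identity component of $H^{Zar}$ is precisely $H$ (connected), so $d_mH = m(G)$ forces $\overline{S}\supseteq H$ and, since $\overline{S}\subseteq H^{Zar}$ which is connected there, $\overline{S}=H$ up to conjugacy. In the odd real case the Zariski closure has component group $(\mathbb{Z}/2)^{m}$ from the $\det<0$ choices, but the constraint that only $m+2$ generators suffice, via Lemma \ref{genablie}(a)–(b), bounds the component group of $\overline{S}$: with $d_m(\overline{S})^0 = m+2 = 1+\dim V - \dim T$ already maximal, the component group $\overline{S}/\overline{S}^{0}$ can be generated by at most $m+2$ elements but Corollary \ref{cor1} shows it contributes nothing extra, and a closer look (the root construction in Lemma \ref{genablie}(b) absorbing one component into the generator set) shows $\overline{S}^{0}$ is a conjugate $H_1$ of $H$ of index $1$ or $2$.

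For (b) I would argue as follows: once $\overline{S}^{0}$ contains a conjugate of $H$ and $H$ contains the maximal torus $T$ while having an open orbit, Corollary \ref{cor2}(b) says the $H$-orbit through $x$ is open; in the complex case by Corollary \ref{cor12} (using that $G(x)$ is dense for complex actions, as promised from \cite{AbMa1}) this open orbit is in fact all of $V\setminus\{0\}$ and $\overline{Sx}=V$ — or more elementarily, $(\mathbb{C}^*)^n$ acts transitively on the open dense set of vectors with all coordinates nonzero, whose closure is $V$. In the even real case $(\mathbb{R}^*_{>0}\cdot SO(2))^m\cong(\mathbb{C}^*)^m$ acts the same way on $\mathbb{R}^{2m}\cong\mathbb{C}^m$, giving a dense orbit. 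In the odd real case the extra $\mathbb{R}^*_{>0}$-block acts on the last coordinate only by positive scalars, so the $H$-orbit of a point with all blocks nonzero is the set where the last coordinate is positive (a half-space), whose closure is a closed half-space, and with the index-$2$ extension $\overline{S}$ acting by $\pm$ on that coordinate one gets all of $\mathbb{R}^{2m+1}$. The final sentence about generators in the connected case is immediate from Corollary \ref{subGexp} applied to $H$, whose Lie algebra splits as the stated sum of $1$- and $2$-dimensional pieces with $T$'s lattice $\Gamma$ spanned by the rotation generators.

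The main obstacle I expect is the odd real case in (c): pinning down exactly why the index is $1$ or $2$ and never larger. The subtlety is that the Zariski closure may a priori have component group $(\mathbb{Z}/2)^{m}$, and one must show that requiring only $m+2$ topological generators — rather than the $m+2$ needed just for the identity component plus extra generators for the component group — forces all but at most one of those $\mathbb{Z}/2$ factors to disappear from $\overline{S}$. This requires using Lemma \ref{genablie}(b) sharply: the root-extraction trick lets one "free" generator of $G^0$ simultaneously hit one component, but no more, so $d_m\overline{S} = \max(m+2, \#\text{component generators})$ can stay at $m+2$ only if $\overline{S}/\overline{S}^{0}$ needs at most $m+2$ generators \emph{and} at most one of them is not already covered — which, combined with the fact that $\overline{S}$ is a \emph{semigroup} closure so its image in the finite component group is a subgroup, leaves index $1$ or $2$ as the only possibilities. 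Getting this bookkeeping exactly right, rather than a weaker "bounded index," is the delicate point.
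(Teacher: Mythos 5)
There is a genuine gap, and it sits at the very first step. You base everything on the claim that an abelian subgroup of $GL(n,\mathbb{C})$ of dimension $n$ with an open orbit must split $V$ into one-dimensional eigenspaces and hence be conjugate to $(\mathbb{C}^*)^n$ (and the analogous block classification over $\mathbb{R}$). This is false: the group of invertible upper triangular Toeplitz matrices is an abelian subgroup of (complex, resp.\ real) dimension $n$ with an open, in fact dense, orbit, and it is not conjugate into the diagonal group since it contains non-diagonalizable unipotent elements. The paper itself uses exactly this group in section 5, and it is the reason why the minimal number of generators for triangular non-diagonalizable semigroups is $n+1$ (real) or $n+2$ (complex) rather than matching the diagonal case. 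So dimension of the Zariski closure plus openness of the orbit cannot give you (a), and since your proof of (c) then asserts that $\overline{S}$ sits inside a conjugate of $H$ (or a mild extension of it) ``by the classification'', the whole chain collapses.

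The missing idea is that the hypothesis that $S$ has only $m(G)=1+\dim V-\dim T$ generators must be used \emph{before} any structure claim, to force a large torus into the Zariski closure $G_1$ of $S$: by Theorem \ref{somewheregen} (applied to $G_1$, whose dimension is $\dim_{\mathbb R}V$ by Theorem \ref{sdenseLie1}), $S$ needs at least $1+\dim V-\dim T_1$ generators, where $T_1$ is the maximal torus of $G_1$; comparing with $m(G)$ gives $\dim T_1\geq \dim T$, so $G_1$ contains a maximal torus of $G$, which after conjugation may be taken to be $T$. Since $G_1$ is abelian it lies in the centralizer of $T$, which is $H$ in the complex and even real cases and $T_{\mathbb C}\times\mathbb{R}^*$ (containing $H$ with index $2$) in the odd real case; the dimension count then gives $G_1^0=H$, and Theorem \ref{sdenseLie1} gives $H\subseteq\overline{S}\subseteq G_1$, which is (c), from which (a) and (b) follow by inspecting $H$ and its orbits. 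Note also that this makes your ``delicate bookkeeping'' in the odd case via Lemma \ref{genablie} unnecessary (and as written it is not a proof): once $\overline{S}$ lies in the centralizer of $T$, which has only two connected components, the index is automatically $1$ or $2$; the hypothetical $(\mathbb{Z}/2)^m$ component group never arises. Your argument for (b), and the appeal to Remark \ref{subGrem} and Corollary \ref{subGexp} at the end, are fine once (c) is in place, and agree with the paper.
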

\begin{proof}
Let us consider the following subgroups of $G$. For $G=GL(n,\mathbb{C})$ let $H:=T_{\mathbb{C}}$  be the group of all diagonal matrices in $GL(n,\mathbb{C})$ and let
$T$ be the subgroup of all those matrices in $H$ all of whose entries are of absolute value $1$. For $GL(2m,\mathbb{R})$ identify $\mathbb{R}^{2m}$ with
$\mathbb{C}^m$ and let $T$ and $H$ be the corresponding subgroups of $GL(m,\mathbb{C})\subset GL(2m,\mathbb{R})$. For $GL(2m+1,\mathbb{R})$ identify
$\mathbb{R}^{2m+1}$ with $\mathbb{C}^m \oplus \mathbb{R}$ and let $T$ be the corresponding subgroup of $GL(m,\mathbb{C}) \times GL(1,\mathbb{R})\subset
GL(2m+1,\mathbb{R})$. For $H$ take $T_{\mathbb{C}}\times \mathbb{R}^*_{>0}\subset GL(m,\mathbb{C}) \times GL(1,\mathbb{R})\subset GL(2m+1,\mathbb{R})$. Then $T$ and
$H$ have the properties of corollary \ref{cor2} and thus prove the first four rows of table \ref{table3}. In fact, every point of the space has a somewhere dense
orbit, except for the points of a finite union of vector subspaces of real codimension $1$ or $2$. The orbit is actually dense in $V$ except for
$G=GL(2m+1,\mathbb{R})$ where its closure is either the whole space or a half space.

Let now $S$ be a subsemigroup of $G$ generated by $m(G)$ commuting elements. Then the Zariski closure $G_1$ of $S$ has dimension $\dim_{\mathbb{R}}V$ by theorem
\ref{sdenseLie1} and contains a torus of dimension at least $\dim T$, by theorem \ref{conablie}, since $m(G)=1+\dim V -\dim T$. It follows that $G_1$ contains a
maximal torus of $G$. But all maximal tori of $G$ are conjugate. Thus, by conjugating, we may assume that $G_1$ contains $T$. Then $G_1$ is contained in the
centralizer of $T$. But the connected component of the centralizer of $T$ is $H$ in all cases we consider. More specifically, the centralizer of $T$ is $H$ and thus
connected, except for $G=GL(2m+1,\mathbb{R})$ where it is $T_{\mathbb{C}}\times \mathbb{R}^*$ and thus contains $H$ of index $2$. Now $\dim G_1=\dim H=\dim V$ and
$G_1^{0}\subset H$, so $H=G_1^{0}$. But the closure $\overline{S}$ of $S$ is a subgroup of $G_1$ and contains $G_1^{0}$, by theorem \ref{sdenseLie1}, so
$\overline{S}=H$ or, for $G=GL(2m+1,\mathbb{R})$, it also may happen that $\overline{S}$ contains $H$ of index $2$. This implies all the claims in corollary
\ref{cor3}.
\end{proof}

We now deal with the remaining cases of table \ref{table3}. Let $e_1,\ldots,e_n$ be the standard basis of $\mathbb{R}^n$ or $\mathbb{C}^n$. The \textit{backward
shift} $\sigma$ is the linear map with $\sigma (e_i)=e_{i-1}$ for $i=2,\ldots,n$ and $\sigma (e_1)=0$. A linear endomorphism is called a Toeplitz operator if it
commutes with $\sigma$. The following observation is easily checked.

\medskip

\noindent \textit{Observation.} For a linear endomorphism $A$ of $\mathbb{R}^n$ or $\mathbb{C}^n$ with representing matrix $a_{ij}$  the following properties are
equivalent:

\begin{enumerate}
\item[(a)] $A$ is Toeplitz.

\item[(b)] $A$ is a linear combination of $\sigma^i$, $i=0,\ldots,n-1$, where $\sigma^0$ is the identity.

\item[(c)] $a_{i+1,j+1}=a_{ij}$ for all $i,j$ and all $a_{ij}=0$ for $i>j$.
\end{enumerate}

\medskip

It follows that the group $G$ of all invertible Toeplitz operators is commutative and has a dense orbit. The group $G$ is connected in the complex case and has two
connected components in the real case, corresponding to positive resp. negative diagonal elements. Maximal tori in $G$ are $S^1$ in the complex case and the trivial
group in the real case. This proves the lines with the Toeplitz operators in table \ref{table3}.

For the group $G$ of triangular matrices, in the complex case $G$ contains the complex diagonal matrices, hence we are in line $2$, and in the real case there is no
torus. This shows that $m(G)\geq n+1$ in the real case. The opposite inequality $m(G)\leq n+1$ follows e.g. from the Toeplitz case.

In a forthcoming paper \cite{AbMa1} we will give more precise information on the closure of $Sx$, if $Sx$ is somewhere dense. It is a union of convex cones of a very
special type.

\bigskip

\noindent \textbf{Acknowledgements.} We would like to thank G.A. Margulis for pointing out proposition \ref{prop32}.

\end{document}